\renewcommand{\P}{\mathbb{P}}
\newcommand{\E}{\mathbb{E}}
\newcommand{\Rd}{\mathbb{R}^d}
\newcommand{\Rp}{\mathbb{R}^+}
\newcommand{\F}{\mathcal{F}}
\newcommand{\Lgen}{\mathcal{L}}
\newcommand{\barr}{\begin{array}{rcl}}
\newcommand{\earr}{\end{array}}
\newcommand{\disp}{\displaystyle}
\newcommand{\Ema}{\E^{m_a}}
\newcommand{\ind}{1{\hskip -2.5 pt}\hbox{I}}
\newcommand{\DomL}{\mathcal{D}(\Lgen)}
 \journalname{Potential Analysis}
\begin{document}

\title{Harnack Inequality and Regularity for a Product of Symmetric Stable Process and Brownian Motion}

\titlerunning{Harnack Inequalitiy}        

\author{Deniz Karl\i}


\institute{Deniz Karli \at
              Department of Mathematics,
	    The University of British Columbia, 1984 Mathematics Road
	    Vancouver, B.C.
	    Canada V6T 1Z2 \\
              \email{deniz.karli@math.ubc.ca}       }

\date{}

\maketitle

\begin{abstract}
In this paper, we consider a product of a symmetric stable process in $\Rd$ and a one-dimensional Brownian motion in $\Rp$. Then we define a class of harmonic functions with respect to this product process. We show that bounded non-negative harmonic functions in the upper-half space satisfy  Harnack inequality and prove that they are locally H\"older continuous. We also argue a result on Littlewood-Paley functions which are obtained by the $\alpha$-harmonic extension of an $L^p(\Rd)$ function.

\keywords{Harnack Inequality \and Symmetric stable process \and Regularity   \and Dirichlet problem \and Littlewood-Paley functions }
 \subclass{60 \and 42}
\end{abstract}

\section{Introduction}
\label{intro}
In the last 20 years, there has been an increasing interest in non-continuous stochastic processes. In particular symmetric $\alpha$-stable processes play 
an important role in today's probability theory, and  there has
been  a remarkable increase in the number of applications of symmetric $\alpha$-stable processes. \\ 
In this paper, we focus on a product process $X_t$ which is the product of a d-dimensional symmetric $\alpha$-stable process and a one-dimensional Brownian motion. We define $\alpha$-harmonic functions with respect to this process in a probabilistic way and then study its applications.\\
The organization of this paper is as follows: \\
One of the most fundamental tools about harmonic functions is the Harnack inequality. In section \ref{HarnackInequality}, we prove a Harnack inequality for $\alpha$-harmonic functions in $\Rd\times\Rp$. First, we show that the hitting probability of a Borel set with positive measure is positive. Then we prove the Harnack inequality using Krylov-Safonov's approach \cite{krylov_safonov}. In section \ref{regularity}, we study regularity of these $\alpha$-harmonic functions and prove their H\"older continuity. \\
One of the ways to obtain an $\alpha$-harmonic function is to extend a function on $\Rd$ which can be taken as the boundary of $\Rd\times\Rp$. This is also known as the Dirichlet problem for the upper-half space. We define the solution of this problem by means of the semigroup corresponding to $X_t$. Then in the last section, we state some results on Littlewood-Paley functions studied by P.A. Meyer and N.Varopoulos and then prove a result on a partial Littlewood-Paley function by using Harnack inequality. \\ \\ \\


\section{Preliminaries}
In this section, we give the details of the setup and introduce the notation which will be used for the rest of the paper.
We consider the space $\Rd$ with d-dimensional Lebesgue measure $m$ and the upper-half space $\Rd \times \Rp $ as a subspace of  $ \mathbb{R}^{d+1} $. For simplicity, we identify the boundary of the upper-half plane, $\Rd \times \{0\}$, with $\Rd$. We will denote the product measure $m \otimes \epsilon_a $ by $m_a$ for $a \in \Rp$, where $\epsilon_a$ is point mass at $a$. Throughout the paper $c, c_1, c_2,...$ will denote constants. Their values may vary from line to line. 

Now we introduce our process in details. Let $Y_t$ denote a d-dimensional right-continuous, symmetric $\alpha$-stable process, that is, $Y_t$ is a right-continuous Markov process with independent and stationary increments and its characteristic function is $e^{-t|\xi |^\alpha}$. It is known that $Y_t$ satisfies the scaling property with the factor of $t^{1/\alpha}$, that is, the processes $(Y_{ct}-Y_0)$ and $c^{1/\alpha}(Y_{t}-Y_0)$ have the same distribution for any $c>0$. Let $P_t$ denote the semigroup corresponding to $Y_t$, that is, $P_t(f)(x)=\E^x(f(Y_t))$. Here $\P^x$ is a probability measure corresponding $Y_t$ started at the point $x$, and $\E^x$ is the expectation with respect to $\P^x$. It is known that the measure $ \P^x(Y_t \in dy)$ is absolutely continuous with respect to Lebesgue measure. We will denote transition densities of this symmetric stable process by $p(t,x,y)$. Unlike Brownian motion, there isn't any simple explicit formula for the transition densities. However, we will use the following estimate on $p(t,x,y)$ :
  \begin{align}\label{sas_estimate}
 		c_1\,(s^{-d/\alpha} \wedge \frac{s}{|x-y|^{d+\alpha}}) \leq p(s,x,y) \leq c_2\,(s^{-d/\alpha} \wedge \frac{s}{|x-y|^{d+\alpha}}) 
  \end{align}
for some positive constants $c_1$ and $c_2$. Moreover, by \cite[P.261]{sato} $p(s,0,x)$ can also be expressed as 
\begin{align}\label{trans_dens}
\int_0^\infty (4\pi u)^{-d/2} e^{-|x|^2/(4u)} g_{\alpha/2}(s,u)du
\end{align}
 where  $ g_{\alpha/2}(s,u)$ is the density of an $\alpha/2$ stable subordinator whose Laplace transform is given by $\int_0^\infty e^{-\lambda v}g_{\alpha/2}(s,v)dv=e^{-s\lambda^{\alpha/2}}$. Hence $p(s,0,x)$ is continuous and differentiable in the variable $x$. 

This stable process $Y_t$ forms the horizontal component of our product process if we think $\Rd$ as the horizontal and $\Rp$ as the vertical component of $\Rd\times\Rp$. On the vertical direction we let $Z_t$ be a one dimensional Brownian motion which is independent from $Y_t$, and so the product process is $X_t=(Y_t,Z_t)$.  We define the stopping time $T_0=\inf\{t \geq 0 : Z_t=0\}$, which is the first time $X_t$ hits the boundary which is $\Rd\times\{0\}$. We note that $T_0$ depends only on $Z_t$ by independence of $Y_t$ and $Z_t$. \\

In the classical context, the definition of a harmonic function can be given in both analytic and probabilistic way. We adopt the probabilistic definition and modify it to define $\alpha$-harmonic functions. In this paper, we call a continuous function $u(x,t)$ on $\Rd\times\Rp$ harmonic ($\alpha$-harmonic) if the process $u(X_{t\wedge T_0})$ is a martingale with respect to the filtration $\F_s=\sigma(X_{r \wedge T_0} ; r \leq s)$ and the probability measure $\P^{(x,t)}$ for any starting point $(x,t) \in \Rd\times\Rp$.  One natural way to obtain such a harmonic function is to start with a bounded Borel function $f$ on $\Rd$ and extend it to the upper-half space. One can write this extension by 
\begin{align}\label{extension}
\E^{(x,t)}f(Y_{T_0})=\int_0^\infty \E^x f(Y_s) \P^t(T_0 \in ds), \quad (x,t)\in \mathbb{R}^d\times\mathbb{R}^+.
\end{align}
Here, $\P^t(T_0 \in ds)$ is the exit distribution of Brownian motion and its explicit form is well known. It is the probability measure $\mu_t(ds)$ where 
\begin{align}\label{brownian_density}
	\mu_t(ds)=\frac{t}{2\sqrt{\pi}} e^{-t^2/4s} s^{-3/2} ds
\end{align}
(see \cite{meyer_long}). On the other hand, $\E^x f(Y_s)$ can be written as $P_s(f)(x)=\int f(y) p(s,x,y) \, dy.$ We note that this extension (\ref{extension}) is a continuous function of $(x,t)$ (by (\ref{brownian_density}) and (\ref{trans_dens})). For simplicity, let's denote this extension by $f$ as well and write $f_t(x)=f(x,t)=\E^{(x,t)}f(Y_{T_0})$  for $t>0$ and $f(x,0)=f(x)$. If we consider the process $M_t^f=f(X_{t\wedge T_0})$ and the filtration $\F_s=\sigma(X_{r \wedge T_0} ; r \leq s)$ then it is easy to see that $M_t^f$ is a martingale under the probability measure $\P^{(x,t)}$ for any $x$ and for any $t>0$ with respect to the given filtration. So the extension is a harmonic function.

This harmonic extension can also be expressed as a convolution $f(x,t)= f*q_t(x)$ where $q_t(x)=\int_0^\infty p(s,x,0) \mu_t(ds)$. If we define $Q_t$ by $Q_t=\int_0^\infty P_s \,\mu_t(ds)$ then we can see that the extension of $f$ is $f(x,t)=Q_t(f)(x)$.  We note that $Q_t$ satisfies semi-group properties. 

In section \ref{LpTheory}, the main idea is based on the relation between a deterministic integral and a probabilistic integral. For the conversion between these two integral, we will use two tools, namely the Green function of Brownian motion and L\'evy system formula for the jump terms. If $f$ is a positive Borel function, then  the Green function for Brownian motion is given by  
$$
\E^{a} \left[ \int_0^{T_0} f(Z_s) ds \right]=\int_0^\infty (z \wedge a) f(z) dz
$$
 for any a $\in \Rp$. 
The second tool, the L\'evy system formula, can be stated as follows. 
\begin{theorem}\label{Levy_system_corollary}
Suppose $f$ is a positive measurable function on $\Rd\times\Rd$. If $f(x,y)$ is zero on the diagonal then
$$ \E^x\left[ \sum_{s\leq t}f(Y_{s-},Y_s ) \right] = \E^x\left[\int_0^t  \, \int f(Y_s,Y_{s}+u) \,\frac{du}{|u|^{d+\alpha}} \, ds \right]$$
 for any $x\in \Rd$.  
\end{theorem}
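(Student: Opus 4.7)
The plan is to exploit the Lévy process structure of $Y_t$. Since $Y$ has characteristic exponent $|\xi|^\alpha$, the Lévy--Khintchine formula together with Fourier inversion (and a scaling/rotation argument) identify its Lévy measure as $\nu(du) = c_{d,\alpha}|u|^{-d-\alpha}\,du$, and I absorb the constant $c_{d,\alpha}$ into the normalization of the statement. The associated jump random measure
$$
N(ds, du) \;=\; \sum_{r>0:\ \Delta Y_r \neq 0} \delta_{(r,\Delta Y_r)}
$$
is then, by the Lévy--Itô decomposition, a Poisson random measure on $(0,\infty)\times(\Rd\setminus\{0\})$ with intensity $ds\otimes \nu(du)$ under $\P^x$.

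Because $f$ vanishes on the diagonal, only genuine jump times contribute to the sum, and I would rewrite
$$
\sum_{s\leq t} f(Y_{s-}, Y_s) \;=\; \int_0^t\!\!\int_{\Rd\setminus\{0\}} f(Y_{s-},\,Y_{s-}+u)\, N(ds,du).
$$
The integrand is predictable because $s \mapsto Y_{s-}$ is left-continuous and adapted. The standard compensation formula for Poisson random measures then yields
$$
\E^x \int_0^t\!\!\int f(Y_{s-}, Y_{s-}+u)\, N(ds,du) \;=\; \E^x \int_0^t\!\!\int f(Y_{s-}, Y_{s-}+u)\,\nu(du)\,ds.
$$
Only countably many $s$ can satisfy $Y_{s-}\neq Y_s$, so replacing $Y_{s-}$ by $Y_s$ in the outer $ds$-integral changes nothing and produces the claimed identity.

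The main technical obstacle is that $\nu$ is infinite near the origin, so the compensation formula cannot be applied directly to a general positive $f$ that vanishes only on the diagonal. My approach is to first establish the identity for truncated integrands $f_n(x,y) = (f(x,y)\wedge n)\,\mathbf{1}_{\{1/n \leq |x-y|\leq n\}}$, which have support bounded away from the diagonal and for which both sides are finite; here the standard Poisson random measure compensation formula applies without difficulty and predictability of $(s,u)\mapsto f_n(Y_{s-},Y_{s-}+u)$ is immediate. Monotone convergence then allows passage $n\to\infty$ on both sides of the identity simultaneously, with the two sides taking the value $+\infty$ together if $f$ is too large.
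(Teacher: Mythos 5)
Your proof is correct. Note that the paper does not actually prove this statement; it simply refers the reader to Bass--Levin, who establish a L\'evy system in the more general setting of jump processes built from Dirichlet forms whose jump kernel is merely \emph{comparable} to $|h|^{-d-\alpha}$, relying on the general Hunt-process machinery for L\'evy systems. Because $Y$ here is literally a L\'evy process, your route is both more elementary and more self-contained: the jump measure is a Poisson random measure with intensity $ds\otimes\nu(du)$ by L\'evy--It\^o, the integrand $(s,u)\mapsto f(Y_{s-},Y_{s-}+u)$ is predictable since $s\mapsto Y_{s-}$ is left-continuous and adapted, and the compensation formula gives the identity. Your truncation $f_n(x,y)=(f(x,y)\wedge n)\ind_{\{1/n\leq|x-y|\leq n\}}$ followed by monotone convergence correctly handles the non-integrability of $\nu$ near the origin and allows both sides to be $+\infty$ simultaneously; and the replacement of $Y_{s-}$ by $Y_s$ in the $ds$-integral is justified since the discontinuity set of $s\mapsto Y_s$ is countable, hence Lebesgue-null. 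The one point worth flagging explicitly is the normalization: with characteristic exponent $|\xi|^\alpha$ the L\'evy measure is $c_{d,\alpha}|u|^{-d-\alpha}\,du$ for a specific constant, and the theorem as stated (unlike the paper's formula for $\Lgen$) suppresses that constant; you correctly note it must be absorbed into the normalization of the jump kernel.
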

For the proof, we refer to the paper \cite{Bass_Levin}. This property will be called as the L\'evy system formula throughout the rest of this paper.

 Now, let $\Lgen$ be the infinitesimal generator of the symmetric $\alpha$-stable process $Y_t$ on its domain $\DomL$, which is defined as  $\lim_{t\rightarrow 0^+} (P_t-I)/t$. It can be given explicitly by
$$
\Lgen f(x)=\int [f(x+h)-f(x)-\ind_{\{|h| \leq 1\}}\nabla f(x) \cdot h] \frac{c}{|h|^{d+\alpha}} dh \,
$$
\cite[section 3.3]{applebaum}. We know that the semi-group $P_t$ admits a square operator $\Gamma$ which is given by the relation
$$
   2\Gamma(f,g)=\Lgen(fg)-f\Lgen(g)-g\Lgen(f).
$$
If we denote the point evaluation of $\Gamma(f,g)$ at $x$  by $\Gamma_x(f,g)$, then we have the following proposition.

\begin{proposition} 
If $f$ is a bounded function in $\DomL$ and $f_t(x)=Q_tf(x)$ as defined above, then $$\Gamma_x(f_t,f_t)=\displaystyle c \int \frac{[f_t(x+h)-f_t(x)]^2}{|h|^{d+\alpha}} dh.$$
\end{proposition}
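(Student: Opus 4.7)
The plan is to verify the carré du champ identity by a direct substitution of the explicit Lévy–Khintchine representation of $\Lgen$ given in the preliminaries. Starting from the definition
$$2\Gamma_x(f_t,f_t) = \Lgen(f_t^2)(x) - 2 f_t(x)\,\Lgen(f_t)(x),$$
I will expand both $\Lgen$-terms, observe that the gradient drift-correction pieces cancel, and recognize what remains as a perfect square.

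First I would check that $f_t, f_t^2 \in \DomL$ so that all quantities are meaningful. Since $f$ is bounded and $f_t(x) = Q_t f(x) = (f \ast q_t)(x)$ with $q_t(x) = \int_0^\infty p(s,x,0)\,\mu_t(ds)$ obtained from (\ref{trans_dens})–(\ref{brownian_density}), the kernel $q_t$ is smooth and integrable in $x$ for each fixed $t>0$; hence $f_t$ is bounded and smooth with bounded derivatives, and the same holds for $f_t^2$. Next, applying
$$\Lgen g(x) = \int \bigl[g(x+h) - g(x) - \ind_{\{|h|\leq 1\}} \nabla g(x)\cdot h\bigr] \frac{c}{|h|^{d+\alpha}}\, dh$$
to $g = f_t^2$ and using $\nabla(f_t^2)(x) = 2 f_t(x)\nabla f_t(x)$, the drift-correction term in $\Lgen(f_t^2)(x)$ is exactly $2 f_t(x)$ times the drift-correction term in $\Lgen(f_t)(x)$. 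Subtracting $2 f_t(x)\Lgen(f_t)(x)$ therefore wipes out the $\ind_{\{|h|\leq 1\}}$ contribution, and the remaining integrand simplifies by the algebraic identity
$$f_t(x+h)^2 - f_t(x)^2 - 2 f_t(x)\bigl(f_t(x+h) - f_t(x)\bigr) = \bigl(f_t(x+h) - f_t(x)\bigr)^2,$$
which produces the stated formula once the factor $1/2$ is absorbed into $c$ (as per the constant convention).

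The part that actually requires care, and which I expect to be the main obstacle, is making the term-by-term subtraction legitimate: the integrals defining $\Lgen(f_t^2)$ and $\Lgen(f_t)$ are only absolutely convergent because of the drift-correction cancellation, so before combining them into a single integral one must verify that $(f_t(x+h) - f_t(x))^2 |h|^{-d-\alpha}$ is itself integrable. Near $h=0$, smoothness of $f_t$ gives an integrand of order $|h|^{2-d-\alpha}$, integrable since $\alpha<2$; for large $|h|$, boundedness of $f_t$ gives order $|h|^{-d-\alpha}$, again integrable. With this justification of the splitting, the algebraic computation above completes the proof.
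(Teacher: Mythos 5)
Your proof is correct and follows essentially the same route as the paper: expand $\Lgen(f_t^2) - 2f_t\Lgen(f_t)$ via the explicit L\'evy--Khintchine integral, use $\nabla(f_t^2) = 2f_t\nabla f_t$ to kill the drift-correction pieces, and recognize the perfect square. The minor differences are (i) you establish $f_t, f_t^2 \in \DomL$ by arguing directly from smoothness of the convolution kernel $q_t$, whereas the paper invokes results of Meyer (Lemma 5 and Theorem 2 of \cite{meyer_long}); (ii) the paper splits into the cases $2>\alpha>1$ and $1\geq\alpha>0$, handling the drift term by cancellation in the first case and by dropping it in the second, while you note that the cancellation $\nabla(f_t^2)=2f_t\nabla f_t$ disposes of the drift term uniformly in $\alpha$, which is slightly cleaner; and (iii) you add an explicit check that $(f_t(x+h)-f_t(x))^2|h|^{-d-\alpha}$ is integrable, order $|h|^{2-d-\alpha}$ near $0$ and $|h|^{-d-\alpha}$ at infinity, a point the paper leaves implicit. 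None of these differences amounts to a genuinely distinct argument.
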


\begin{proof}
According to the paper \cite[Lemma 5 on p. 153]{meyer_long}, $f_t$ is in $\DomL$. Moreover, again by   \cite[Theorem 2 on p.147]{meyer_long}, $f_t^2$ is also in $\DomL$ since the map $x\rightarrow x^2$ is in $\mathcal{C}^2$. By definition , $2\Gamma(f_t,f_t)=\Lgen(f_t^2)-2f_t\Lgen(f_t)$. Now we need to consider two cases where $2>\alpha>1$ and $1\geq \alpha>0$ separately. In the first case, we have
$$
f_t(x)\Lgen(f_t(x))=\int [f_t(x)f_t(x+h)-f_t^2(x)-\ind_{\{|h| \leq 1\}}f_t(x)\nabla f_t(x) \cdot h] \frac{c}{|h|^{d+\alpha}} dh
$$
and
$$
\Lgen(f_t^2(x))=\int [f_t^2(x+h)-f_t^2(x)-\ind_{\{|h| \leq 1\}}\nabla (f_t^2)(x) \cdot h] \frac{c}{|h|^{d+\alpha}} dh \,.
$$
So,
\begin{align*}
2\Gamma_x(f_t,f_t)  & = \disp \int [f_t(x+h)-f_t(x)]^2 \frac{c}{|h|^{d+\alpha}} dh \\ 
                                 &    \qquad \disp +\int \ind_{\{|h| \leq 1\}}h \cdot [2f_t(x)\nabla f_t(x)-\nabla(f_t^2)(x)] \frac{c}{|h|^{d+\alpha}} dh \,. 
\end{align*}
Since $\nabla (f_t^2)=2f_t\nabla f_t$, the second term is zero. Hence the result follows. In the second case where $1\geq \alpha>0$ , we can drop the last term of the integrand of the generator, since the integral $\disp \int \ind_{\{|h| \leq 1\}} \frac{h}{|h|^{d+\alpha}} dh$ is zero. Then a similar calculation yields to the desired result.
\end{proof}
By Meyer \cite[P.158]{meyer_long}, the quadratic variation of the martingale $M_t^f$ is 
\begin{align}\label{quad_var}
\disp \langle M^f \rangle_t=2\int_0^{t\wedge T_0} g(Y_s,Z_s) ds
\end{align}
where $\disp g(x,t)=\Gamma_x(f_t,f_t)+\left[ \frac{\partial}{\partial t} f(x,t) \right]^2$ and $\Gamma_x(f_t,f_t)$ is as given above.  This function appear in section \ref{LpTheory} when we construct Littlewood-Paley functions.

Before we end this section, let's list some short remarks which will be useful later. 

\begin{remark}\label{remark:4}
 If $m$ is Lebesgue measure, let $\P^{m_a}$ be the measure defined by 
$$
  \P^{m_a}=\int \P^{(x,a)} \, m(dx)
$$
and $\disp \E^{m_a}$ the integral taken with respect to the measure $\P^{m_a}$.   \end{remark}
\begin{remark}\label{remark:5}
 The law of $X_{T_0}$ under the measure $\P^{m_a}$ is $m$.  \end{remark}
\begin{remark}\label{remark:6}
The semi-groups $P_t$ and $Q_t$ are invariant under integration with respect to Lebesgue measure. If $f$ is a bounded Borel function then 
$$
\int P_t(f)(x) \, m(dx) = \int f(x) \, m(dx)= \int Q_t(f)(x) \, m(dx).
$$
 \end{remark}




\section{Harnack Inequality}\label{HarnackInequality}

When one study harmonic functions on a domain, one of the most useful tools in harmonic analysis is the Harnack inequality. It allows us to compare values of a harmonic function inside a domain and it plays a crucial role for many applications. In this section, our main goal is to obtain a Harnack inequality (Theorem \ref{harnack_inequality}) in this setup which was described in the previous section. To prove the inequality, we follow Krylov-Safonov approach (see \cite{krylov_safonov}) and the method used by Bass-Levin in \cite{Bass_Levin}. The idea is based on the fact that the hitting probability of a Borel set with positive measure is non-zero. So we focus on hitting probability first.

Let's start by introducing the notation to be used in this section. Since we have different scaling factors in vertical and horizontal directions, we will consider the rectangular box $D_r(x,t)=\overrightarrow{D}_r(x) \times B_r (t)$, where
$$\overrightarrow{D}_r(x)=\{ y\in \Rd: |x_i-y_i|<\frac{r^{2/\alpha}}{2} ,\,i=1,...,d,\, x=(x_1,...,x_d)\}$$ 
and
$$B_r(t)=\{ s\in \mathbb{R}: |s-t|<\frac{r}{2}\}.$$ 
For $\epsilon\in (0,1)$, we define the box with $\epsilon$ margin by $D^\epsilon_r(x,t)=\overrightarrow{D}^\epsilon_r(x) \times B_{(1-\epsilon)r} (t)$ where
$$\overrightarrow{D}^\epsilon_r(x)=\{ y\in \Rd: |x_i-y_i|<\frac{(1-\epsilon^{2/\alpha})r^{2/\alpha}}{2} ,\,i=1,...,d,\, x=(x_1,...,x_d)\}.$$ 
We denote the first hitting time and first exit time of a Borel set A by $T_A$ and $\tau_A$, respectively. If A is a point set then we will use $T_x$ instead of $T_{\{x\}}$. 

First we observe that expected exit time from the box $D_r(x,t)$ is comparable to $r^2$.

\begin{lemma}
Let $\epsilon>0$. There exists a positive constant $c$ such that
$$
  \E^{(y,s)}(\tau_{D_r(x,t)})\geq\, c\, r^2
$$
for $(y,s)\in D_r^\epsilon (x,t)$ and $D_{2r}(x,t)\subset\Rd\times\Rp$.
\end{lemma}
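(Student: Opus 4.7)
The plan is to exploit the product structure of $X_t=(Y_t,Z_t)$ together with the scaling exponents built into the definition of $D_r(x,t)$: the horizontal half-side $r^{2/\alpha}/2$ is calibrated to the stable scaling so that the exit time of $Y$ from $\overrightarrow{D}_r(x)$ is of order $r^2$, and the vertical half-side $r/2$ is calibrated to the Brownian scaling so that the exit time of $Z$ from $B_r(t)$ is also of order $r^2$.

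Set
\[
\tau^Y=\inf\{u\ge 0: Y_u\notin \overrightarrow{D}_r(x)\},\qquad \tau^Z=\inf\{u\ge 0: Z_u\notin B_r(t)\}.
\]
Because $Y$ and $Z$ are independent, $\tau_{D_r(x,t)}=\tau^Y\wedge\tau^Z$ and
\[
\P^{(y,s)}\bigl(\tau_{D_r(x,t)}>u\bigr)=\P^y(\tau^Y>u)\,\P^s(\tau^Z>u).
\]
The first step is to bound each factor from below at time $u=cr^2$ for a small fixed $c>0$. For the Brownian factor, $|s-t|\le(1-\epsilon)r/2$, and the rescaling $u\mapsto r^{-1}Z_{r^2u}$ is a standard Brownian motion starting at $(s-t)/r$, a point in $(-(1-\epsilon)/2,(1-\epsilon)/2)$. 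Hence $\P^s(\tau^Z>cr^2)\ge p_1(\epsilon,c)>0$. For the stable factor, the scaling property recalled in the preliminaries says that $u\mapsto r^{-2/\alpha}(Y_{r^2u}-Y_0)$ is again a symmetric $\alpha$-stable process; under this rescaling $\overrightarrow{D}_r(x)$ becomes the unit cube of half-side $1/2$, and $y$ lands at a point at distance at least $\epsilon^{2/\alpha}/2$ from its boundary. Thus $\P^y(\tau^Y>cr^2)\ge p_2(\epsilon,c)>0$, a constant independent of $r$, $x$, $y$.

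The second step is Markov's inequality:
\[
\E^{(y,s)}\bigl[\tau_{D_r(x,t)}\bigr]\ge cr^2\,\P^{(y,s)}\bigl(\tau_{D_r(x,t)}>cr^2\bigr)\ge cr^2\,p_1(\epsilon,c)\,p_2(\epsilon,c),
\]
which yields the claim with a constant depending only on $\epsilon$.

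The only delicate point is verifying that $p_2(\epsilon,c)$ is genuinely a uniform positive lower bound. After the scaling reduction this amounts to showing that a symmetric $\alpha$-stable process started at any point in the open cube of half-side $1/2$ at distance at least $\epsilon^{2/\alpha}/2$ from the boundary has positive probability of remaining in the cube for a fixed positive time. This follows from right-continuity of paths and the a.s.\ positivity of the exit time from an open set containing the starting point; a quantitative version can be obtained either from the transition density estimate \eqref{sas_estimate} or from the analogous computation $\E^y[\tau^Y]\ge c(\epsilon)>0$ together with the trivial bound $\E^y[\tau^Y]\le c+\P^y(\tau^Y>c)\,\E^y[\tau^Y\mid\tau^Y>c]$ combined with a uniform upper bound on $\E^y[\tau^Y]$ for $y$ in the unit cube.
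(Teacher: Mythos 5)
Your proof is correct and takes essentially the same approach as the paper: independence to factor $\tau_{D_r}=\tau^Y\wedge\tau^Z$, scaling to normalize, and the inequality $\E[\tau]\ge u'\,\P(\tau>u')$ at a time $u'\sim r^2$. The ``delicate point'' you flag is resolved more cleanly in the paper by lower-bounding each exit probability with the corresponding increment probability---$\P(\tau^Y>u')\ge\P(\sup_{v\le u'}|Y_v-Y_0|<\epsilon^{2/\alpha}r^{2/\alpha}/2)$, using that $(y,s)\in D_r^\epsilon(x,t)$ places each coordinate of $y$ at distance at least $\epsilon^{2/\alpha}r^{2/\alpha}/2$ from the boundary of $\overrightarrow{D}_r(x)$, and similarly for $Z$---so that by stationary increments and scaling these become $\P^0(\sup_{v\le u}|Y_v|<1/2)$ and $\P^0(\sup_{v\le u}|Z_v|<1/2)$, fixed numbers independent of $r$, $x$, $y$, $s$; choosing $u$ small pushes both above $1/2$, and no separate uniform-in-starting-point estimate (via density bounds or $\E^y[\tau^Y]$ computations) is required.
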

\begin{proof}
Let $u>0$. Then using the independence of $Y_v$ and $Z_v$ first, and the scaling property second, we obtain

\begin{eqnarray*}
  \lefteqn{ \E^{(y,s)}(\tau_{D_r(x,t)}) } \\ 
		&\quad& \geq  \E^{(y,s)}(\tau_{D_r(x,t)};  \tau_{D_r(x,t)} \geq \epsilon^2 r^2 u)\\ 
		&\quad& \geq  \epsilon^2 r^2 u \, \P^{(y,s)}( \tau_{D_r(x,t)} \geq \epsilon^2 r^2 u)\\ 
  		&\quad& \geq \epsilon^2 r^2 u\, \P^{(y,s)}(\sup_{v\leq \epsilon^2r^2u} |Y_v-Y_0| <\frac{ \epsilon^{2/\alpha} r^{2/\alpha}}{2}, \, \sup_{v\leq \epsilon^2r^2u} |Z_v-Z_0| <\frac{ \epsilon r}{2}) \\ 
   		&& =  \epsilon^2 r^2 u\, \P^{y}(\sup_{v\leq \epsilon^2r^2u} |Y_v-Y_0| < \frac{\epsilon^{2/\alpha} r^{2/\alpha}}{2})\, \P^{s}( \sup_{v\leq \epsilon^2r^2u} |Z_v-Z_0| < \frac{\epsilon r}{2}) \\ 
		&& =   \epsilon^2 r^2 u\, \P^{0}(\sup_{v\leq u} |Y_{v}| < 1/2)\, \P^{0}( \sup_{v\leq u} |Z_{v}| < 1/2).
\end{eqnarray*}

If we choose $u$ small enough, then the last two probabilities above can be made bigger than $1/2$. Then the result follows.\end{proof}

\begin{lemma}\label{exp_leq_r2}
There exists a positive constant $c$ such that
$$
  \E^{(y,s)}(\tau_{D_r(x,t)})\leq\, c\, r^2
$$
for any $(y,s)\in D_r(x,t)$ and $D_{2r}(x,t)\subset\Rd\times\Rp$.
\end{lemma}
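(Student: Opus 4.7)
The upper bound is much easier than the matching lower bound because we only need to control the exit time in one of the two coordinate directions. The plan is to use the vertical (Brownian) component, which has the simpler closed-form behaviour.

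First I would observe that if $(y,s) \in D_r(x,t)$, then in order for the process $X_v = (Y_v, Z_v)$ to leave the box $D_r(x,t) = \vec{D}_r(x) \times B_r(t)$, at the very least the second coordinate $Z_v$ must leave $B_r(t)$. In other words, if we let $\tau^Z$ denote the exit time of $Z_v$ from the one-dimensional interval $B_r(t) = (t - r/2, t + r/2)$, then the pathwise inequality
\[
\tau_{D_r(x,t)} \leq \tau^Z
\]
holds under $\mathbb{P}^{(y,s)}$. Taking expectations reduces the problem to an estimate for a one-dimensional Brownian motion.

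Next I would compute (or quote) the expected exit time of a standard Brownian motion from an interval. By Brownian scaling, if $\tau^{Z,0}$ denotes the exit time of $Z$ from $(-1/2,1/2)$, then
\[
\mathbb{E}^s(\tau^Z) = r^2\, \mathbb{E}^{(s-t)/r}(\tau^{Z,0}),
\]
and since $(s-t)/r \in (-1/2,1/2)$ and $\sup_{|u|<1/2} \mathbb{E}^u(\tau^{Z,0})$ is a finite constant (explicitly, $(1/2-u)(1/2+u) \leq 1/4$), we get $\mathbb{E}^s(\tau^Z) \leq c\, r^2$. Combining with the pathwise bound above gives
\[
\mathbb{E}^{(y,s)}(\tau_{D_r(x,t)}) \leq \mathbb{E}^s(\tau^Z) \leq c\, r^2,
\]
as desired.

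There is essentially no obstacle here; independence of $Y$ and $Z$ is not even needed since the inequality $\tau_{D_r(x,t)}\leq \tau^Z$ is pathwise, and the hypothesis $D_{2r}(x,t) \subset \mathbb{R}^d \times \mathbb{R}^+$ is only used implicitly to ensure $B_r(t) \subset \mathbb{R}^+$ so that $Z_v$ is genuinely a Brownian motion on the interval (it is not killed at $0$ before exiting). Note also that the hypothesis $(y,s)\in D_r(x,t)$, rather than $D_r^\epsilon(x,t)$ as in the previous lemma, is fine for the upper bound since we are bounding from above by the largest possible starting-point exit expectation.
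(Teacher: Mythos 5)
Your proof is correct, but it takes a genuinely different route from the paper's. You exploit the simplest feature of the setup: since $\tau_{D_r(x,t)} = \tau^Y \wedge \tau^Z \leq \tau^Z$ pathwise, and the expected exit time of one-dimensional Brownian motion from an interval of width $r$ is explicitly $(s-(t-r/2))((t+r/2)-s) \leq r^2/4$, the bound follows immediately, with no need for the stable component at all. The paper instead works entirely through the horizontal stable process $Y$: after scaling to $r=1$, it applies the L\'evy system formula to show that $Y$ makes a jump of size $> 2$ (forcing exit) within unit time with probability bounded away from $0$; iterating via the Markov property gives a geometric tail $\P^{(y,s)}(\tau_{D_1} > m) \leq (1-c')^m$, hence a finite expectation. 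Your argument is shorter and gives a clean explicit constant, but it is special to having a Brownian coordinate with a known exit-time formula. The paper's argument is more robust --- it would survive, for instance, replacing $Z$ by another process without such a formula, or proving the analogue in a setting where neither coordinate is easy on its own --- and it introduces the L\'evy-system/geometric-tail technique that is reused later (e.g.\ in Theorem \ref{thm_box} and the regularity section). Both proofs are valid; yours buys brevity here, the paper's buys uniformity of method across the section.
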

\begin{proof}
By scaling we may consider the case $r=1$, and so it is enough to show that $ \E^{(y,s)}(\tau_{D_1(x,t)})\leq\, c\,$ where $D_{2}(x,t)\subset\Rd\times\Rp$. Let $S$ be the first time when $Y_v$ jumps of size larger than two. If we use the L\'evy system formula, 
\begin{align*}
  \P^{(y,s)}(S\leq 1) & = \E^{(y,s)} \sum_{v\leq S\wedge 1}  \ind_{\{|Y_v-Y_{v-}|>2\}} = \E^{(y,s)} \int^{S\wedge 1}_0 \int_{|h|>2}  \frac{dh}{|h|^{d+\alpha}}\, dv\\ 
			     & =\left[ \int_{|h|>2}  \frac{dh}{|h|^{d+\alpha}} \right]\,\E^{(y,s)} ({S\wedge 1})\\ 
			     & \geq c\,\E^{(y,s)} ({S\wedge 1}; \, S>1) = c\,\P^{(y,s)} ( S>1)= c\,[1-\P^{(y,s)} ( S\leq 1)].		     
\end{align*}
Hence there is $c' \in (0,1)$ such that  $ \P^{(y,s)}(S\leq 1) \geq c'$, and thus $\P^{(y,s)}(S > 1) \leq (1-c')$. We note that $\tau_{D_1(x,t)}$ is smaller than $S$. Thus $\P^{(y,s)}(\tau_{D_1(x,t)} > 1) \leq (1-c')$. Let's denote the usual shift operator by $\theta_m$. By the Markov property,
\begin{align*}
	\P^{(y,s)}(\tau_{D_1(x,t)}>m+1) &= \P^{(y,s)}(\tau_{D_1(x,t)}>m, \tau_{D_1(x,t)}\circ \theta_m>1) \\ 
						  &= \E^{(y,s)}(  \P^{(y,s)}( \tau_{D_1(x,t)}\circ \theta_m>1 | \F_m) ; \tau_{D_1(x,t)}>m) \\ 
						   &= \E^{(y,s)}(  \P^{X_m}( \tau_{D_1(x,t)}>1) ; \tau_{D_1(x,t)}>m) \\ 
						   &\leq (1-c')\, \P^{(y,s)}( \tau_{D_1(x,t)}>m). 
\end{align*}
Then we obtain $\P^{(y,s)}(\tau_{D_1(x,t)}>m)\leq (1-c')^m$ by induction. This leads to 
$$
  \E^{(y,s)}(\tau_{D_1(x,t)}) \leq 1+\sum_{m=1}^\infty \P^{(y,s)}(\tau_{D_1(x,t)}>m) =c\,<\infty.
$$
\end{proof}

Before we prove our theorem on hitting probabilities for general Borel sets, we consider the simple sets, which are the boxes of the form $E\times [a,b]\subset \Rd\times\Rp$

\begin{theorem}\label{thm_box}
Suppose $(y,s)\in D_{2}(x,t)$ and $K=E\times [a,b]$ is a rectangular box in $D_1(x,t)$ such that $E\subset\mathbb{R}^d$. Then 
$$
   \P^{(y,s)}(T_K<\tau_{D_{3}(x,t)})\geq c \cdot m(E) \cdot (b-a)
$$
for some positive constant $c$ and $D_{6}(x,t)\subset\Rd\times\Rp$.
\end{theorem}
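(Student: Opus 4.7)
The strategy is to apply the L\'evy system of Theorem \ref{Levy_system_corollary} to count the jumps of $Y$ that drop $X$ into $K$, and to compare that count with $\P^{(y,s)}(T_K\leq\tau)$, where $\tau:=\tau_{D_3(x,t)}$. Put
\[
N:=\sum_{v\leq T_K\wedge\tau}\ind_{\{Y_{v-}\neq Y_v,\ Y_v\in E,\ Z_v\in[a,b]\}}.
\]
Because $X_v\notin K$ for $v<T_K$, the only possible contribution to $N$ is the jump at $v=T_K$ itself, giving $N\leq\ind_{\{T_K\leq\tau\}}$ and hence $\E^{(y,s)}[N]\leq\P^{(y,s)}(T_K\leq\tau)$. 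Applying Theorem \ref{Levy_system_corollary} to $Y$ with the $\sigma(Z)$-predictable factor $\ind_{Z_v\in[a,b]}$ brought inside (by independence of $Y$ and $Z$) gives
\[
\E^{(y,s)}[N]=\E^{(y,s)}\!\left[\int_0^{T_K\wedge\tau}\ind_{Z_v\in[a,b]}\int_E\frac{c\,dw}{|w-Y_v|^{d+\alpha}}\,dv\right].
\]
On this domain, for a.e.\ $v$ with $\ind_{Z_v\in[a,b]}=1$ one has $Y_v\in\overrightarrow{D}_3(x)\setminus E$ (otherwise $X_v\in K$, contradicting $v<T_K$), and $w\in E\subset\overrightarrow{D}_1(x)$; hence $|w-Y_v|\leq\kappa:=\mathrm{diam}\,\overrightarrow{D}_3(x)$, so the inner integral is at least $c_1m(E)$. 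Therefore
\[
\P^{(y,s)}(T_K\leq\tau)\geq c_1m(E)\,B,\qquad B:=\E^{(y,s)}\!\left[\int_0^{T_K\wedge\tau}\ind_{Z_v\in[a,b]}\,dv\right].
\]

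To remove the random cap $T_K\wedge\tau$, set $A:=\E^{(y,s)}[\int_0^\tau\ind_{Z_v\in[a,b]}\,dv]$ and write
\[
A-B=\E^{(y,s)}\!\left[\int_{T_K}^\tau\ind_{Z_v\in[a,b]}\,dv;\,T_K\leq\tau\right]\leq\E^{(y,s)}\!\left[\tau-T_K;\,T_K\leq\tau\right].
\]
The strong Markov property at $T_K$ combined with Lemma \ref{exp_leq_r2} (whose hypothesis $D_6(x,t)\subset\Rd\times\Rp$ is exactly the one given) bounds this by $C_1\,\P^{(y,s)}(T_K\leq\tau)$. Setting $p:=\P^{(y,s)}(T_K\leq\tau)$, the inequality $p\geq c_1m(E)(A-C_1p)$ together with $m(E)\leq m(\overrightarrow{D}_1(x))$ yields $p\geq c_2m(E)\,A$, so it remains to show $A\geq c_5(b-a)$.

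By independence of $Y$ and $Z$ and the identity $\tau=\tau^Y_{\overrightarrow{D}_3(x)}\wedge\tau^Z_{B_3(t)}$, Fubini gives
\[
A=\int_0^\infty\P^y(\tau^Y>v)\,\P^s\!\left(\tau^Z>v,\,Z_v\in[a,b]\right)\,dv.
\]
I bound $A$ below by restricting to a fixed interval $[V_1,V_2]$ of positive constants. On this interval the killed Brownian transition density on $B_3(t)$ is bounded below uniformly for $s\in B_2(t)$ and $z\in B_1(t)$ by the classical heat-kernel lower bound, so $\P^s(\tau^Z>v,\,Z_v\in[a,b])\geq c_4(b-a)$. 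For the stable factor I combine a short-time displacement estimate for $Y$ (the same mechanism as the $S$-argument used in the proof of Lemma \ref{exp_leq_r2}) with a finite iteration along a nested chain $\overrightarrow{D}_2(x)\subset\overrightarrow{D}_{r_1}(x)\subset\cdots\subset\overrightarrow{D}_3(x)$, engineered so that the process remains well inside the outer box at every step; this yields $\P^y(\tau^Y>V_2)\geq c_3>0$ uniformly for $y\in\overrightarrow{D}_2(x)$. Integrating produces $A\geq c_3c_4(V_2-V_1)(b-a)=c_5(b-a)$, whence $p\geq c\,m(E)(b-a)$.

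The main technical obstacle is the uniform lower bound $\P^y(\tau^Y>V_2)\geq c_3$: a single short-time tail estimate degrades for $y$ near $\partial\overrightarrow{D}_3(x)$, which is why the nested-box iteration is essential. The L\'evy-system computation, the strong Markov / Lemma \ref{exp_leq_r2} step, and the Brownian heat-kernel lower bound are by now standard.
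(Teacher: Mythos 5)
Your proposal is correct, and it takes a genuinely different route from the paper's proof in both of its main steps, so a comparison is worthwhile.

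\textbf{The L\'evy-system step.} You cap the jump count at $T_K\wedge\tau$, obtain $N\leq\ind_{\{T_K\leq\tau\}}$, and then carry the cap through the L\'evy system, which produces the quantity $B=\E^{(y,s)}\bigl[\int_0^{T_K\wedge\tau}\ind_{\{Z_v\in[a,b]\}}\,dv\bigr]$ rather than $A=\E^{(y,s)}\bigl[\int_0^{\tau}\ind_{\{Z_v\in[a,b]\}}\,dv\bigr]$. You then remove the cap with a strong-Markov-plus-Lemma-\ref{exp_leq_r2} absorption argument, $p\geq c_1m(E)(A-C_1p)$. The paper instead writes the sum (and the resulting occupation integral) all the way up to $\tau_{D_3(x,t)}$ with no cap, and declares $\P^{(y,s)}(T_K<\tau_{D_3})\geq\E^{(y,s)}[\mathrm{count}]$. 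As written this inequality points the wrong way: the count can exceed $1$ if $Y$ jumps into $E$ more than once while $Z\in[a,b]$, and a probability cannot dominate an unbounded count. The intended argument almost certainly caps at $T_K\wedge\tau$, exactly as you do, but then the paper still proceeds to lower-bound the uncapped integral $A$, which is the larger of the two. Your $A-B$ correction term is precisely what is needed to make that reduction rigorous, so on this point your version is the careful one and the paper's is incomplete.

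\textbf{The occupation-time lower bound $A\geq c(b-a)$.} Here the two proofs genuinely diverge. The paper splits into the three cases $s<a$, $s>b$, $s\in[a,b]$, builds the piecewise polynomial $h$ with $h''=\ind_{(a,b)}$, applies It\^o/optional stopping to the Brownian martingale $h(Z_\cdot)$, and reduces the bound to the explicit probability $\P^{(0,2)}(T^{(2)}_5<T^{(2)}_1\wedge\tau_{c_\alpha})$. This is elementary and self-contained but requires the case analysis and the auxiliary function. You instead use independence and Fubini to factor
\[
A=\int_0^\infty\P^y(\tau^Y>v)\,\P^s(\tau^Z>v,\,Z_v\in[a,b])\,dv
\]
and then invoke a lower bound for the killed Brownian heat kernel on the interval, together with a positive survival probability for the stable process. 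This is cleaner structurally and avoids the case split, at the cost of importing the heat-kernel lower bound as a black box. Both routes are sound.

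\textbf{A small simplification you can make.} You flag, as the ``main technical obstacle,'' that a single short-time tail estimate for $Y$ ``degrades for $y$ near $\partial\overrightarrow{D}_3(x)$.'' But $(y,s)\in D_2(x,t)$ means $y\in\overrightarrow{D}_2(x)$, which is at distance at least $c_\alpha=(3^{2/\alpha}-2^{2/\alpha})/2$ from $\partial\overrightarrow{D}_3(x)$ \emph{uniformly}. So $\P^y(\tau^Y>V_2)\geq\P^0\bigl(\sup_{v\leq V_2}|Y_v|<c_\alpha\bigr)$, a fixed positive constant by scaling. The nested-box iteration is not needed; a single displacement estimate suffices. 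This is, incidentally, the same observation the paper uses when it notes $\tau_{c_\alpha}\leq\tau^{(1)}_{\overrightarrow{D}_3(x)}$ under $\P^{(y,s)}$.
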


\begin{proof}
First, using the Levy system formula, we obtain
\begin{align*}
	 \P^{(y,s)}(T_K<\tau_{D_{3}(x,t)})& \disp \geq \E^{(y,s)}\left( \sum_{v\leq \tau_{D_3(x,t)}} \ind_{\{ Z_v\in [a,b] \}}  \ind_{\{ Y_v\neq Y_{v-} , Y_v \in E\}} \right)\\
			& = \E^{(y,s)}\left( \int_0^{\tau_{D_3(x,t)}} \ind_{\{ Z_v\in [a,b] \}}  \int_E \frac{dz}{|Y_v-z|^{d+\alpha}} dv\right)\\
			& \geq c\, m(E)\, \E^{(y,s)}\left( \int_0^{\tau_{D_3(x,t)}} \ind_{\{ Z_v\in [a,b] \}}dv\right).
\end{align*}
To verify the last inequality, notice that $|Y_v-z|$ is bounded above for any $z\in E$ under $\P^{(y,s)}$.\\
Next, we will show 
\begin{align*}
	\E^{(y,s)}\left( \int_0^{\tau_{D_3(x,t)}} \ind_{\{ Z_v\in [a,b] \}}dv\right) \geq c\, (b-a).
\end{align*}
In this proof, we will use $\tau^{(1)}_A$ and $T^{(1)}_A$ for the first exit time and the first hitting time of a Borel set $A\subset\mathbb{R}^d$ by symmetric stable process $Y_t$, respectively. Similarly, $\tau^{(2)}_A$ and $T^{(2)}_A$ denote the first exit time and the first hitting time of a Borel set $A\subset\mathbb{R}^+$ by Brownian motion $Z_t$, respectively. If $A\subset\Rp$ is a point set then we will use the notation $T^{(2)}_{\{u\}}=T^{(2)}_u$. \\
First, we note that $\tau_{D_3(x,t)}=  \tau^{(1)}_{\overrightarrow{D}_3(x)}\wedge  \tau^{(2)}_{B_3(t)}=\tau^{(1)}_{\overrightarrow{D}_3(x)}\wedge\tau^{(2)}_{(t-3/2,t+3/2)}$. Set $c_\alpha=(3^{2/\alpha}-2^{2/\alpha})/2$. Then 
$$  \tau_{c_\alpha}:= \inf\{v\geq 0 : |Y_v-Y_0|\geq c_\alpha\} \leq \tau^{(1)}_{\overrightarrow{D}_3(x)}$$
under $\P^{(y,s)}$, since we have $y \in  {\overrightarrow{D}_2(x)}$. Hence
\begin{align*}
	\E^{(y,s)}\left( \int_0^{ \tau_{{D}_3(x,t)}} \ind_{\{Z_v \in [a,b]\}}\, dv \right)\geq \E^{(y,s)}\left( \int_0^{\tau_{c_\alpha} \wedge \tilde \tau_3}\ind_{\{Z_v \in [a,b]\}}\, dv \right)
\end{align*}
where $\tilde \tau_3=T^{(2)}_{t+3/2}\wedge T^{(2)}_{t-3/2}$.
There are three possible cases: (i.) $s<a$, (ii.) $s>b$ or (iii.) $s\in [a,b]$. First, assume that $s<a$ and define a function $h:\mathbb{R} \rightarrow \mathbb{R}$ by

\begin{equation*}
h(x)= 
\begin{cases} 
	0 &\text{ if $x<a$,}\\
	(x-a)^2/2 &\text{if $x\in [a,b) $,}\\
	(b-a)(x-\frac{a+b}{2}) &\text{if $x\geq b $.}
\end{cases}
\end{equation*}
Note that $h\geq 0$ and $h''=\ind_{(a,b)}$, so that 
\begin{align*}
	\frac{1}{2}\E^{(0,s)}\left(\int_0^{\tau_{c_\alpha} \wedge \tilde \tau_3}     \ind_{\{Z_v \in [a,b]\}}\, dv  \right)&=\E^{(0,s)}(h(Z_{\tau_{c_\alpha} \wedge \tilde \tau_3}) -h(Z_0))\\
				&\geq \E^{(0,s)}(h(Z_{\tau_{c_\alpha} \wedge \tilde \tau_3}) -h(Z_0); T^{(2)}_{t+3/2}<T^{(2)}_{t-3/2}\wedge \tau_{c_\alpha})\\
				&= (h(t+3/2) -h(s)) \P^{(0,s)}(T^{(2)}_{t+3/2}<T^{(2)}_{t-3/2}\wedge \tau_{c_\alpha})\\
				&\geq (h(t+3/2) -h(s)) \P^{(0,t-1)}(T^{(2)}_{t+3/2}<T^{(2)}_{t-3/2}\wedge \tau_{c_\alpha})
\end{align*}
since $s \in (t-1,t+1)$.
Now we note that 
$$h(t+3/2)-h(s)\geq (b-a)(t+3/2-(b\vee s))\geq (b-a)$$
since  $[a,b]\subset (t-1/2,t+1/2)$.
Finally by translation invariance of Brownian motion 
$$\P^{(0,t-1)}(T^{(2)}_{t+3/2}<T^{(2)}_{t-3/2}\wedge \tau_{c_\alpha})=\P^{(0,1)}(T^{(2)}_{7/2}<T^{(2)}_{1/2}\wedge \tau_{c_\alpha})$$
and so 
$$ \E^{(0,s)}\left(\int_0^{\tau_{c_\alpha} \wedge \tilde \tau_3}     \ind_{\{Z_v \in [a,b]\}}\, dv  \right) \geq c(b-a). $$
This proves the the first part. \\

By symmetry, the case $s>b$ follows from the same argument. If we take $s'=2t-s$, $a'=2t-a$ and $b'=2t-b$ then 
\begin{align*}
	\E^{(0,s)}\left(\int_0^{\tau_{c_\alpha} \wedge \tilde \tau_3}     \ind_{\{Z_v \in [a,b]\}}\, dv  \right)=\E^{(0,s')}\left(\int_0^{\tau_{c_\alpha} \wedge \tilde \tau_3}     \ind_{\{Z_v \in [b',a']\}}\, dv  \right)
\end{align*}
by symmetry, and the last term is bounded below by $c(a'-b')=c(b-a)$.\\

For the last case, assume $s\in[a,b]$. We use the same function $h(x)$ as in the case $s<a$. Arguing as in the case $s<a$,
\begin{align*}
		\E^{(y,s)}\left(\int_0^{\tau_{D_3(x,t)}}     \ind_{\{Z_v \in [a,b]\}}\, dv  \right)
				&\geq \E^{(0,s)}\left(\int_0^{\tau_{c_\alpha} \wedge \tilde \tau_3}     \ind_{\{Z_v \in [a,b]\}}\, dv  \right)\\
				&=2\,\E^{(0,s)}(h(Z_{\tau_{c_\alpha} \wedge \tilde \tau_3}) -h(Z_0))\\
				&\geq 2\, (h(t+3/2) -h(s)) \P^{(0,s)}(T^{(2)}_{t+3/2}<T^{(2)}_{t-3/2}\wedge \tau_{c_\alpha}).\\
\end{align*}
Moreover, by translation invariance of Brownian motion
$$
	\P^{(0,s)}(T^{(2)}_{t+3/2}<T^{(2)}_{t-3/2}\wedge \tau_{c_\alpha})\geq \P^{(0,s)}(T^{(2)}_{s+3}<T^{(2)}_{s-1}\wedge \tau_{c_\alpha})=\P^{(0,2)}(T^{(2)}_{5}<T^{(2)}_{1}\wedge \tau_{c_\alpha}).
$$
Since $s\in [a,b]$, we have $h(s)\leq (b-a)^2/2$ and
\begin{align*}
	h(t+3/2)-h(s) &\geq (b-a) (t+3/2-\frac{a+b}{2})-\frac{(b-a)^2}{2}\\
				&\geq (b-a) (t+3/2-b)\\
				& \geq (b-a).
\end{align*}
Hence,  
\begin{align*}
		\E^{(y,s)}\left(\int_0^{\tau_{D_3(x,t)}}     \ind_{\{Z_v \in [a,b]\}}\, dv  \right) \geq \P^{(0,2)}(T^{(2)}_{5}<T^{(2)}_{1}\wedge \tau_{c_\alpha}) (b-a).
\end{align*}
So in any case
\begin{align*}
		\E^{(y,s)}\left(\int_0^{\tau_{D_3(x,t)}}     \ind_{\{Z_v \in [a,b]\}}\, dv  \right) \geq c (b-a).
\end{align*} 
\end{proof}

So we proved the hitting probability of a rectangular box is positive. We can extend this to any compact set in $D_1(x,t)$ using Krylov-Safonov's method which is based on covering the compact set with rectangular boxes. From here on, we denote the Lebesgue measure of a set $A$ in $\Rd\times\Rp$ by $|A|$.


\begin{corollary}\label{harnack_corollary}
There exists a non-decreasing function $\varphi:(0,1)\rightarrow (0,1)$ such that  if $A$ is a compact set inside $D_1(x,t)$ such that $|A|>0$ and $(y,s)\in D_{2}(x,t)$ then 
$$
   \P^{(y,s)}(T_A<\tau_{D_{3}(x,t)}) \geq \varphi( |A|)
$$
where $D_{6}(x,t)\subset\Rd\times\Rp$.
\end{corollary}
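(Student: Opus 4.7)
The plan is to reduce the general compact-set case to the rectangular box case (Theorem \ref{thm_box}) by a Krylov--Safonov-type argument that uses the L\'evy system for the jumps of $Y$ rather than a literal inner covering (a compact set of positive measure need not contain any rectangular box). The quantitative bound $\varphi(|A|)$ will come from counting the expected number of jumps of $Y$ that land the product process inside $A$ before $\tau_{D_3(x,t)}$, and then converting this first moment into a probability via the strong Markov property.

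I would proceed as follows. Let $N_A := \sum_{v \leq \tau_{D_3(x,t)}} \ind_{\{Y_v \neq Y_{v-}\}} \ind_A(Y_v, Z_v)$, and note $\{N_A \geq 1\} \subseteq \{T_A < \tau_{D_3(x,t)}\}$. Since $Z$ is continuous, jump times of the product are jump times of $Y$, and Theorem \ref{Levy_system_corollary} gives
$$\E^{(y,s)}[N_A] = c \, \E^{(y,s)} \int_0^{\tau_{D_3(x,t)}} \int_{\Rd} \ind_A(w, Z_v)\, |w - Y_v|^{-(d+\alpha)}\, dw \, dv.$$
Because $A \subset D_1(x,t)$ and $Y_v \in \overrightarrow{D}_3(x)$ until $\tau_{D_3(x,t)}$, the distance $|w - Y_v|$ is bounded above by a constant depending only on the geometry of the boxes, so
$$\E^{(y,s)}[N_A] \geq c_1 \,\E^{(y,s)} \int_0^{\tau_{D_3(x,t)}} m(A_{Z_v}) \, dv,$$
where $A_z := \{w \in \Rd : (w,z) \in A\}$. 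Next, I bound the time integral below by $c_2 |A|$: by Fubini, the independence of $Y$ and $Z$, and the computation with the auxiliary function $h$ used in Theorem \ref{thm_box} applied slicewise in $z$ (after restricting to the positive-probability event $\{\tau^{(1)}_{\overrightarrow{D}_3(x)} \geq 1\}$ that comes from the scaling argument of the preceding lemmas), the expected Brownian occupation of each horizontal slice $A_z$ is bounded below by a constant multiple of $m(A_z)$, and integrating in $z$ recovers $c_2 |A|$. Finally, the strong Markov property applied at $T_A$ gives
$$\E^{(y,s)}[N_A] \leq \P^{(y,s)}(T_A < \tau_{D_3(x,t)})\Bigl( 1 + \sup_{(y',s') \in D_3(x,t)} \E^{(y',s')}[N_A] \Bigr),$$
and the supremum is bounded by a constant $C$ via the same L\'evy-system estimate together with Lemma \ref{exp_leq_r2}. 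Setting $\varphi(|A|) := c_1 c_2 |A|/(1+C)$ (truncated below $1$ if necessary) produces the non-decreasing function we want.

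The main obstacle is the lower bound $\E^{(y,s)} \int_0^{\tau_{D_3(x,t)}} m(A_{Z_v})\, dv \geq c_2 |A|$, which has to be uniform over all compact sets of a given Lebesgue measure regardless of their shape. The difficulty comes from the joint exit time $\tau_{D_3(x,t)} = \tau^{(1)}_{\overrightarrow{D}_3(x)} \wedge \tau^{(2)}_{B_3(t)}$, which couples the otherwise independent processes $Y$ and $Z$. I handle this by conditioning on the uniformly positive-probability event $\{\tau^{(1)} \geq 1\}$, on which the problem reduces to a pure Brownian occupation-time estimate that the $h$-function trick from Theorem \ref{thm_box} already supplies slicewise. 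Once this first-moment bound is secured, the Markov-property step and the uniform bound on $\sup \E^{\cdot}[N_A]$ are routine.
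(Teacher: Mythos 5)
The step you call routine is the one that fails. The L\'evy measure $c\,|u|^{-(d+\alpha)}\,du$ of the stable process is infinite near the origin, so $Y$ makes infinitely many small jumps in every time interval, and consequently $\E^{(y',s')}[N_A]=+\infty$ for any $(y',s')$ from which the product process occupies $A$ for a positive amount of time with positive probability. To see this, note that if $(Y_v,Z_v)$ sits at a slicewise Lebesgue density point of $A$ (a density point of $A_{Z_v}$ in $\Rd$), then $\int \ind_{A_{Z_v}}(Y_v+u)\,|u|^{-(d+\alpha)}\,du=\infty$, because $m(A_{Z_v}\cap B(Y_v,r))\sim c_d r^d$ while $\int_0^\epsilon r^{d-1}\,r^{-(d+\alpha)}\,dr$ diverges; and by Fubini together with the Lebesgue density theorem, almost every point of $A$ is such a slicewise density point. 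Hence the claimed uniform bound $\sup_{(y',s')\in D_3(x,t)}\E^{(y',s')}[N_A]\le C$ is false, your conversion $\E[N_A]\le\P(T_A<\tau_{D_3(x,t)})(1+\sup\E[N_A])$ carries no information, and for the same reason your lower bound $\E^{(y,s)}[N_A]\ge c_1c_2|A|$ is the vacuous statement $\infty\ge c_1c_2|A|$. (The slicewise Brownian occupation-time estimate you flagged as the main obstacle is in fact fine: it is a Green-function lower bound for one-dimensional Brownian motion killed at $\min(1,\tau^{(2)}_{B_3(t)})$, uniformly positive on compact subsets of the interior.)

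A repair is possible if you truncate the counter to jumps of size larger than $\delta$: then $\sup\E[N_A^\delta]\le c\,\delta^{-\alpha}\cdot\sup\E[\tau_{D_3(x,t)}]<\infty$ by Lemma~\ref{exp_leq_r2}, while the lower bound weakens to $\E^{(y,s)}[N_A^\delta]\ge c(|A|-c'\delta^d)$, so choosing $\delta\sim|A|^{1/d}$ yields a polynomial $\varphi(|A|)$. But that truncation is exactly the missing idea; as written, the argument does not close. The paper sidesteps the problem entirely: it proves Corollary~\ref{harnack_corollary} by the Krylov--Safonov iteration --- define $\varphi(\epsilon)$ as an infimum of hitting probabilities, suppose $q_0=\inf\{\epsilon:\varphi(\epsilon)>0\}>0$, cover $A$ by cubes $R_i$ with $|A\cap R_i|>q|R_i|$ via a Calder\'on--Zygmund decomposition, and chain hitting probabilities with the strong Markov property to reach a contradiction --- so Theorem~\ref{thm_box} is only ever applied to rectangular boxes from the covering, and no jump count into a general compact set is estimated.
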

\begin{proof}
  First define 
  \begin{align}
  	\varphi(\epsilon) & =\inf\{\P^{(z,u)}(T_B<\tau_{D_3{(z_0,u_0)}}): (z_0,u_0)\in \Rd \times \Rp,  (z,u) \in D_2{(z_0,u_0)},\\ 
		&  |B|\geq \epsilon \, |D_1{(z_0,u_0)}|,\, B \subset D_1{(z_0,u_0)}, B \mbox{ is compact}, D_6{(z_0,u_0)}\subset \Rd\times\Rp \}. \nonumber
  \end{align}
  Set $\disp q_0=\inf_{\varphi(\epsilon)>0} \epsilon$. We claim that $q_0=0$. Suppose not, that is, let's assume that $q_0$ is strictly positive. This will give us a contradiction. First we note that $q_0<1$. So we can find $q$ such that $(q+q^2)/2<q_0<q$. Let $\eta=(q-q^2)/2$ so that $q-\eta=(q+q^2)/2$. Let $\rho>0$ which will be chosen later. There is $D_6(z_0,u_0) \subset \Rd \times \Rp$, $(z,u)\in D_2(z_0,u_0)$, and $B\in D_1(z_0,u_0)$ such that 
  \begin{align}
  	q>\frac{|B|}{|D_1(z_0,u_0)|} > q-\eta
  \end{align}
  and
  \begin{align}\label{cont1}
   	\P^{(z,u)}(T_B<\tau_{D_3(z_0,u_0)})<\rho \cdot \varphi(q)^2.
  \end{align}
  Without loss of generality we may drop $(z_0,u_0)$ and  denote $D_i(z_0,u_0)$ by $D_i$ for $i=1,2,3$. 
  Now we construct the rectangular region $D$ which is described in the Proposition \cite[7.2]{bass_diff_and_elliptic_op}. Since $B\subset D_1$ and $\disp q> \frac{|B|}{|D_1|}=|B|$, there exists a rectangular region $D$ which satisfies the following conditions:
  \begin{enumerate}
   \item $\disp D=\cup_i \hat{R}_i$ where $R_i$ is a cube and $\hat{R}_i$ denotes the cube with the same center as $R_i$ but side lengths three times as long, 
   \item the interiors of $R_i$'s are pairwise disjoint,
   \item $|B|\leq q \cdot |D \cap D_1|$, and
   \item $|B\cap R_i|>q\cdot |R_i|$ for all $i$.
   \end{enumerate}
 Since $|B|>q-\eta$ and $|B|\leq q\cdot |D\cap D_1|$, we have 
 \begin{align*}
 	|D\cap D_1| \geq \frac{|B|}{q} > \frac{q-\eta}{q} = \frac{q+1}{2} >q.
 \end{align*}
 Set $\tilde{D}=D\cap D_1$. This results in $|\tilde{D}|>q$. By the definition of $\varphi$ , we obtain
 \begin{align}
 	\P^{(z,u)}(T_{\tilde{D}}<\tau_{D_3}) > \varphi(q).
 \end{align}
 To obtain a contradiction we want to show that if $(z',u')\in \tilde{D}$ then 
 \begin{align}\label{cont2}
 	\P^{(z',u')}(T_B<\tau_{D_3})\geq \rho \cdot \varphi(q).
 \end{align}
 If this is true then by using the strong Markov property
 \begin{align*}
 	\P^{(z,u)}(T_B<\tau_{D_3}) & \geq \P^{(z,u)}(T_{\tilde{D}}<T_B<\tau_{D_3}) \\ 
						& \geq \E^{(z,u)}  \left(  \P^{X_{T_{\tilde{D}}}}   (T_B<\tau_{D_3}) ; \, T_{\tilde{D}} < \tau_{D_3} \right) \\ 
						& \geq \rho\cdot \varphi(q)\cdot \P^{(z,u)}  (  T_{\tilde{D}} < \tau_{D_3} ) \\ 
						&\geq \rho\cdot \varphi(q)^2.
 \end{align*}
 This contradicts (\ref{cont1}).
 To obtain (\ref{cont2}), first set $F_i=R_i\cap D_1$. Note that $F_i$ is a rectangular box in $D_1$. So $\hat{F}_i\subset D_3$. Moreover $F_i \cap B= R_i \cap B$, since $B\in D_1$. So we have $|F_i \cap B| > q \cdot |F_i| $, and if $(z',u')\in F_i$ then 
 \begin{align*}
 	\P^{(z',u')}(T_{F_i \cap B}<\tau_{\tilde{F}_i})>\varphi(q)
 \end{align*}
 by the definition of $\varphi$.
 
 If $(z',u')\in\tilde{D}$ then $(z',u')\in \hat{R}_i \cap D_1$ for some $R_i$. We can find a cube $K\subset \Rd$ and $K\times [c,d]\subset F_i$. By the Theorem \ref{thm_box}, there is $\rho$ such that 
 
 \begin{align*}
     \P^{(z',u')}(T_{F_i}<\tau_{D_3})\geq\P^{(z',u')}(T_{K\times [c,d]}<\tau_{D_3})>\rho.
 \end{align*}
 
 Finally, the strong Markov property implies that if $(z',u')\in \tilde{D}$, then 
 
 \begin{align*}
 	\P^{(z',u')}(T_B<\tau_{D_3}) & \geq \P^{(z',u')}(T_{F_i}<\tau_{D_3},\, T_{F_i \cap B} < \tau_{\tilde{F}_i}) \\ 
					&= \E^{(z',u')} \left(\P^{(z',u')}( \left. T_{F_i \cap B} < \tau_{\tilde{F}_i} \right| T_{F_i})  ;  T_{F_i}<\tau{D_3}\right)\\ 
					&=\E^{(z',u')} \left(\P^{X_{T_{F_i}}}(  T_{F_i \cap B} < \tau_{\tilde{F}_i} )  ;  T_{F_i}<\tau{D_3}\right)\\ 
					&=\varphi(q)\cdot \P^{(z',u')} \left( T_{F_i}<\tau{D_3}\right)\\ 
					&=\rho \cdot \varphi(q).
 \end{align*}
 This completes the proof.
\end{proof}


\begin{lemma}\label{harnack_lemma_1}
  Suppose $H$ is a function which is bounded, non-negative on $\mathbb{R}^d\times\mathbb{R}^+$ and supported in $(D_{2r}(x,t))^c$ where $D_{4r}(x,t)\subset\Rd\times\Rp$. If $(y,s),\,(y',s')\in D_r^\epsilon(x,t)$ then 
  $$
    \E^{(y,s)}H(X_{\tau_{D_r}(x,t)}) \leq c\, \E^{(y',s')}H(X_{\tau_{D_r}(x,t)}).
  $$ 
\end{lemma}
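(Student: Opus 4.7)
The plan is to peel off the jump-exit distribution using the L\'evy system for $Y$, reduce by an elementary distance comparison to bounding the vertical occupation integral $\E^{(y,s)} \int_0^{\tau_{D_r}} H(u, Z_v)\, dv$ with $u$ fixed, and then exploit the independence of $Y$ and $Z$ together with standard Green-function estimates for one-dimensional killed Brownian motion to obtain the two-sided comparison in $(y,s)$.

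First I would observe that since $H=0$ on $D_{2r}(x,t)$, any continuous exit of $X$ from $D_r(x,t)$ (necessarily through $Z$ hitting $\partial B_r(t)$) lands in $\overline{\overrightarrow{D}_r(x)} \times \{t \pm r/2\} \subset D_{2r}(x,t)$ and contributes nothing. Hence only jumps of $Y$ contribute; for any $v<\tau_{D_r(x,t)}$ one has $Z_v \in B_r(t) \subset B_{2r}(t)$, so the support condition on $H$ forces the post-jump location $u$ to lie in $\overrightarrow{D}_{2r}(x)^c$. Applying the L\'evy system (Theorem~\ref{Levy_system_corollary}) to $Y$ conditionally on the $Z$-path gives
\begin{align*}
\E^{(y,s)} H(X_{\tau_{D_r(x,t)}}) = c\, \E^{(y,s)} \int_0^{\tau_{D_r(x,t)}} \int_{\overrightarrow{D}_{2r}(x)^c} H(u, Z_v)\, \frac{du}{|Y_v - u|^{d+\alpha}}\, dv.
\end{align*}
An $\ell^\infty$-triangle inequality using $|Y_v - x|_\infty \leq r^{2/\alpha}/2$ and $|u - x|_\infty \geq 2^{2/\alpha - 1} r^{2/\alpha}$ then yields $|Y_v - u| \asymp |x - u|$ with constants depending only on $d$ and $\alpha$. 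Substituting and applying Fubini reduces the task to showing
\begin{align*}
U_u(y, s) := \E^{(y,s)} \int_0^{\tau_{D_r(x,t)}} H(u, Z_v)\, dv \leq c\, U_u(y', s')
\end{align*}
with a constant $c$ independent of $u \in \overrightarrow{D}_{2r}(x)^c$.

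I would then exploit independence of $Y$ and $Z$. Writing $\tau^{(1)}=\tau^{(1)}_{\overrightarrow{D}_r(x)}$, $\tau^{(2)}=\tau^{(2)}_{B_r(t)}$, Fubini gives
\begin{align*}
U_u(y, s) = \int_0^\infty \P^y(\tau^{(1)} > v)\, \E^s\!\left[H(u, Z_v)\, \ind_{\{v < \tau^{(2)}\}}\right] dv.
\end{align*}
Dropping $\P^y(\tau^{(1)} > v) \leq 1$ yields the upper bound $U_u(y, s) \leq \int H(u, z)\, G^{BM}_{B_r(t)}(s, z)\, dz$. For a matching lower bound, I would choose $T^*=c_1 r^2$ with $c_1$ small; a scaling argument applied to Lemma~\ref{exp_leq_r2} for $Y$ on $\overrightarrow{D}_r(x)$, together with a Paley--Zygmund estimate, furnishes $\P^{y'}(\tau^{(1)} > T^*) \geq c_2 > 0$ uniformly for $y' \in \overrightarrow{D}_r^\epsilon(x)$, whence
\begin{align*}
U_u(y', s') \geq c_2 \int H(u, z) \int_0^{T^*} p^{(2)}_{B_r(t)}(v; s', z)\, dv\, dz.
\end{align*}

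The proof would then be closed by two comparisons for the one-dimensional killed Brownian kernel. The explicit piecewise-linear formula for $G^{BM}_{B_r(t)}$ on the interval $B_r(t)$ gives the spatial comparison $G^{BM}_{B_r(t)}(s, z) \asymp G^{BM}_{B_r(t)}(s', z)$ uniformly in $z \in B_r(t)$ for $s, s' \in B_{(1-\epsilon)r}(t)$, and the exponential spectral decay of $p^{(2)}_{B_r(t)}$ at its natural time scale $r^2$ gives the time-truncation comparison $\int_0^{T^*} p^{(2)}_{B_r(t)}(v; s', z)\, dv \asymp G^{BM}_{B_r(t)}(s', z)$ once $c_1$ is of order one. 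Chaining these estimates yields $U_u(y, s) \leq c\, U_u(y', s')$, and integrating in $u$ proves the lemma. The main obstacle I anticipate is this last step: both one-dimensional comparisons must be carried out with constants uniform in $z$ and $r$, the first amounting to a baby Harnack bound for the Brownian Green function on an interval and the second requiring careful use of the spectral expansion to cut off the time integral at $T^* \asymp r^2$ without losing more than a constant factor.
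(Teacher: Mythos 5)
Your proposal is correct in outline and takes a genuinely different, more careful route than the paper. The paper's proof reduces to $H=\ind_{E\times[c,d]}$, applies the L\'evy system, and then \emph{drops} the $Z$-dependence: the identity it writes, $\E^{(y,s)}\ind_{E\times[c,d]}(X_{u\wedge\tau_{D_r}})=\E^{(y,s)}\sum_{v\le u\wedge\tau_{D_r}}\ind_{\{Y_v\neq Y_{v-},\,Y_v\in E\}}$, is missing the factor $\ind_{\{Z_v\in[c,d]\}}$, so after the comparison $|h-Y_v|\asymp|h-x|$ (which you share) only $\E^{(y,s)}(\tau_{D_r})$ remains, and the two-sided bound $c\,r^2\le\E^{(y,s)}\tau_{D_r}\le c'\,r^2$ closes the argument immediately. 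You instead keep $H(u,Z_v)$ inside the L\'evy-system integral, which is the correct object for a general $H$ supported in $(D_{2r})^c$, and you then have to pay for it: independence of $Y$ and $Z$ plus Fubini reduce the problem to comparing the vertical occupation integral $U_u(y,s)=\E^{(y,s)}\int_0^{\tau_{D_r}}H(u,Z_v)\,dv$ across starting points, and that requires the two Green-function estimates you flag at the end. Those do hold: the spatial Harnack $G_{B_r(t)}(s,\cdot)\asymp G_{B_r(t)}(s',\cdot)$ for $s,s'\in B_{(1-\epsilon)r}(t)$ is an elementary calculation from the explicit piecewise-linear Green function; and for the time truncation one can write $\int_{T^*}^\infty p^{(2)}_{B_r(t)}(v;s',z)\,dv=\E^{s'}\bigl[G_{B_r(t)}(Z_{T^*},z);\,\tau^{(2)}>T^*\bigr]\le G_{B_r(t)}(z,z)\,\P^{s'}(\tau^{(2)}>T^*)$ and use the exponential decay of the survival probability at scale $r^2$ together with $G_{B_r(t)}(s',z)\ge c(\epsilon)\,G_{B_r(t)}(z,z)$ to absorb it, taking $c_1$ large enough (the corresponding lower bound $\P^{y'}(\tau^{(1)}>c_1r^2)\ge c_2(\epsilon,c_1)>0$ still holds for any fixed $c_1$ by scaling, so there is no tension with your Paley--Zygmund step, which in fact can be replaced by the direct scaling estimate $\P^{y'}(\sup_{v\le c_1r^2}|Y_v-Y_0|<\epsilon^{2/\alpha}r^{2/\alpha}/2)=\P^0(\sup_{v\le c_1}|Y_v|<1/2)>0$ already used in the paper's first lemma). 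In short: the paper's argument is shorter but implicitly proves the lemma only for $H$ that are constant in the vertical variable; your occupation-time comparison is exactly what handles the general case, at the cost of the two one-dimensional killed-kernel estimates you would still need to write out.
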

\begin{proof}
  It suffices to prove the statement for $H=\ind_F$ with $F\subset (D_{2r}(x,t))^c$. Since $Z_v$ is continuous , $H(X_{\tau_{D_r}})$ is non-zero if and only if $Y$ jumps from $D_r(x,t)$ into $(D_{2r}(x,t))^c$. Since $D_r(x,t)= \overrightarrow{D}_r(x) \times B_r(t)$, it is enough to consider the sets $F=E\times [c,d]$ with $[c,d]\subset B_r(t)$ and the function $H$ of the form $H=\ind_F$. Then
   \begin{align*}
           \E^{(y,s)}\ind_{E\times [c,d]}(X_{u \wedge \tau_{D_r(x,t)}}) & = \E^{(y,s)} \sum_{v\leq u\wedge \tau_{D_r(x,t)}}  \ind_{\{Y_v\not= Y_{v-}, Y_v \in E\}} \\ 
           										    & = \E^{(y,s)} \int_0^{ u\wedge \tau_{D_r(x,t)}} \int_E \frac{dh}{|h-Y_v|^{d+\alpha}} \,dv 
    \end{align*}
Since $Y_v \in \overrightarrow{D}_r(x)$ and $h\in E$, $|h-Y_v|$ is comparable to $|h-x|$, that is, there are constants $c_1$ and $c_2$ so that
$$
 c_1 \int_E \frac{dh}{|h-x|^{d+\alpha}} \leq \int_E \frac{dh}{|h-Y_v|^{d+\alpha}} \leq c_2 \int_E \frac{dh}{|h-x|^{d+\alpha}}.
$$
So 
   \begin{align*}
          \E^{(y,s)} \int_0^{ u\wedge \tau_{D_r(x,t)}} \int_E \frac{dh}{|h-Y_v|^{d+\alpha}} \,dv & \leq c_2 \int_E \frac{dh}{|h-x|^{d+\alpha}} \E^{(y,s)}(u\wedge \tau_{D_r(x,t)}),\\
    \end{align*}
and similarly
   \begin{align*}
          \E^{(y,s)} \int_0^{ u\wedge \tau_{D_r(x,t)}} \int_E \frac{dh}{|h-Y_v|^{d+\alpha}} \,dv & \geq c_1 \int_E \frac{dh}{|h-x|^{d+\alpha}} \E^{(y,s)}(u\wedge \tau_{D_r(x,t)}),\\
    \end{align*}
    Now if we let $u\rightarrow \infty$, and use the fact that $c r^2\leq \E^{(y,s)}( \tau_{D_r(x,t)}) \leq c' r^2 $, then 
   \begin{align*}
           c_3 \, r^2\, \int_E \frac{dh}{|h-x|^{d+\alpha}} \leq \E^{(y,s)}\ind_{E\times [c,d]}(X_{\tau_{D_r(x,t)}}) \leq c_4 \, r^2\, \int_E \frac{dh}{|h-x|^{d+\alpha}}.  
    \end{align*}    
Observe that the above inequality also holds under $\P^{(y',s')}$. Hence the result follows.
 \end{proof}


So far we have showed that any compact set with a positive measure is visited by $X_t$ with positive probability given that the starting point is close. To prove the Harnack inequality, we will use this fact by defining a compact set on which the harmonic function takes large values. Since this set is visited with positive probability, one can define a sequence on which the function is unbounded. 

To simplify our notation, we fix a point temporarily and we will denote the usual rectangular box around this center by $\widetilde{D}_r$.

\begin{theorem}\label{harnack_inequality}
	There exists $c>0$ such that if $h$ is non-negative and bounded on $\mathbb{R}^d\times \mathbb{R}^+$, harmonic in $\widetilde{D}_{16}$ and $\widetilde{D}_{32}\subset\Rd\times\Rp$ then 
	\begin{align*}
			h(y,t)\leq c\, h(y',t')\,, & \quad (y,t),\,(y',t')\in \widetilde{D}_1.
	\end{align*}
\end{theorem}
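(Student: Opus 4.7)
The plan is to follow the Krylov--Safonov chaining argument in the form adapted to pure jump processes by Bass and Levin \cite{Bass_Levin}. The three tools this scheme requires are already established in this section: the expected exit time bound (Lemma~\ref{exp_leq_r2}), the lower bound $\varphi(|A|)$ on the probability of hitting a compact set of positive measure before exiting a larger box (Corollary~\ref{harnack_corollary}), and the two-sided comparison for the far-jump part of the exit distribution across nearby starting points (Lemma~\ref{harnack_lemma_1}). By translation, by the anisotropic scaling built into the definition of $\widetilde{D}_r$, and by replacing $h$ with a positive multiple, it suffices to prove the following normalized statement: there is a universal constant $C$ such that whenever $h \geq 0$ is bounded and harmonic on $\widetilde{D}_{16}$ with $h(y', t') = 1$ for some fixed $(y', t') \in \widetilde{D}_1$, then $h \leq C$ throughout $\widetilde{D}_1$.

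The argument proceeds by contradiction. If no such $C$ exists then for every $K$ one may exhibit such an $h$ and $z_0 \in \widetilde{D}_1$ with $h(z_0) \geq K$. The core of the proof is a one-step growth lemma: there exist constants $\gamma > 1$, $K_0$, and a fixed scale $r_0$ such that if $z$ lies in a slightly enlarged copy of $\widetilde{D}_1$ and $h(z) \geq K \geq K_0$, then there is a point $z^*$ within distance of order $r_0$ from $z$ (in the anisotropic metric of $\widetilde{D}_r$) with $h(z^*) \geq \gamma K$. Iterating this growth lemma starting from $z_0$ produces a sequence of iterates $(z_n)$ along which $h(z_n) \geq \gamma^n K_0$; since each step moves a fixed amount, the iterates remain inside $\widetilde{D}_{16}$ for sufficiently many steps to force $h(z_n)$ past any fixed $L^\infty$ bound on $h$, contradicting the boundedness of $h$ on $\widetilde{D}_{16}$.

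To establish the growth lemma, fix $z$ with $h(z) = K$, choose $r$ so that $D_{6r}(z) \subset \widetilde{D}_{16}$, and set $A = \{w \in D_r(z) : h(w) \geq \gamma K\}$. Apply optional stopping to the bounded martingale $h(X_{t \wedge T_0})$ at $\sigma = T_A \wedge \tau_{D_{3r}(z)}$. On $\{T_A < \tau_{D_{3r}(z)}\}$ one has $h(X_{T_A}) \geq \gamma K$, so
\begin{align*}
  K = h(z) \geq \gamma K\, \P^z(T_A < \tau_{D_{3r}(z)}).
\end{align*}
If $|A|$ is bounded below by a positive fraction of $|D_r|$, Corollary~\ref{harnack_corollary} renders the right-hand side strictly larger than $K$ for $\gamma$ chosen large enough, so a point $z^* \in A$ exists. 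Otherwise $|A|$ is small, and then the companion bound obtained by decomposing the same optional stopping identity by whether $X_\sigma \in D_{6r}(z)$ or not (controlling the near contribution by $\gamma K$ outside $A$ and the far contribution via Lemma~\ref{harnack_lemma_1}, chained through overlapping boxes back to the normalization point $(y', t')$) is inconsistent with $h(z) = K$ for $K$ large, completing the argument.

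The main obstacle is the precise quantitative accounting needed to produce a genuine multiplicative gain $\gamma > 1$ uniform in $h$ and $K$. Three delicate points arise: (i) the far-jump contribution must be absorbed as a universal additive constant, which requires chaining Lemma~\ref{harnack_lemma_1} along a sequence of overlapping boxes back to $(y', t')$; (ii) the near contribution must be strictly dominated by the hit-$A$ contribution, which forces a joint choice of $\gamma$ and $r$; and (iii) the step size in the iteration must be small enough that the iterates $(z_n)$ remain inside $\widetilde{D}_{16}$ for enough steps to surpass the $L^\infty$ bound on $h$ derived from (i). Once these are arranged, the Krylov--Safonov iteration closes and Theorem~\ref{harnack_inequality} follows.
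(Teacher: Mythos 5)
Your outline has the right ingredients---Lemma \ref{exp_leq_r2}, Corollary \ref{harnack_corollary}, Lemma \ref{harnack_lemma_1}, a growth step, an iteration---but the way you set up the growth lemma is incorrect, and the error is not cosmetic.

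You propose a growth lemma at a \emph{fixed} scale $r_0$: if $h(z)\geq K\geq K_0$ then some $z^*$ within distance of order $r_0$ has $h(z^*)\geq\gamma K$. Two things go wrong. First, the claim as stated is false. From the martingale identity $K\geq\gamma K\,\P^z(T_A<\tau_{D_{3r}})$ with $A=\{w\in D_r(z):h(w)\geq\gamma K\}$, Corollary \ref{harnack_corollary} only gives you that $\P^z(T_A<\tau)\leq 1/\gamma$ forces $|A|$ below some threshold; it does not produce a point of $A$, and at a fixed scale the set $A$ could simply be empty with no contradiction arising. The measure bound becomes a contradiction only when the volume of the box itself is tied to $K$. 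Second, even granting a fixed-scale growth lemma, your iteration terminates: with step size $r_0$ the chain leaves $\widetilde{D}_{16}$ after $O(1/r_0)$ steps, and since the $L^\infty$ bound on $h$ is part of the data of the problem rather than a universal constant, you cannot guarantee $\gamma^{O(1/r_0)}K_0$ exceeds it. Your item (iii) acknowledges the tension but offers no resolution; chaining Lemma \ref{harnack_lemma_1} does not supply an a priori $L^\infty$ bound on $h$.

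What makes the argument close, and what is missing from your proposal, is that the scale $r$ must shrink with $K$. After normalizing so that some $(y_0,t_0)\in\widetilde{D}_1$ has $h(y_0,t_0)<1$, the paper chooses $r$ with $|D_{2r/3}|\asymp 1/K$, i.e.\ $r\asymp K^{-\alpha/(2d+\alpha)}$. The normalization point then enters not in the far-jump estimate but in the measure bound: a two-step hitting argument---first from $(y_0,t_0)$ into $D_{2r/3}(x,s)$ with probability $\gtrsim|D_{2r/3}|$ by Theorem \ref{thm_box}, then from $D_{2r/3}(x,s)$ into the high set $A'=\{h\geq\zeta K\}\cap D_{r/3}(x,s)$ with probability $\gtrsim 1$ by Corollary \ref{harnack_corollary} if $|A'|$ were a positive fraction of $|D_{r/3}|$---yields $1\geq h(y_0,t_0)\gtrsim\zeta K\,|D_{2r/3}|\asymp 1$, with constants tuned to make the right side equal $2$, a contradiction. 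So $|A'|\leq|D_{r/3}|/2$. The optional stopping is then carried out at $T_C\wedge\tau_{D_r}$ for a compact $C$ inside the \emph{low} set $D_{r/3}\setminus A'$ (not the high set $A$ as in your sketch), the far-jump term is bounded by $\eta K$ using Lemma \ref{harnack_lemma_1} together with $A'\neq D_{r/3}$, and the resulting inequality $K\leq\zeta K p+M(1-p)+\eta K$ with $M=\sup_{D_{2r}}h$ gives $M\geq(1+2\beta)K$. Finally, because $r_i\asymp K_i^{-\alpha/(2d+\alpha)}$ and $K_i\geq(1+\beta)^{i-1}K_1$, the step sizes form a convergent geometric series; for $K_1$ large the iterates remain in $\widetilde{D}_2$ for all $i$ while $h$ blows up, which is the contradiction. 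The $K$-dependence of $r$ is therefore not a refinement but the mechanism that makes both the growth step and the iteration valid; without it your scheme does not close.
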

\begin{proof}
By taking a constant multiple of $h$, we may assume that $\inf_{\widetilde{D}_1}h=1/2$. Then there is $(y_0,t_0)\in \widetilde{D}_1$ such that $h(y_0,t_0)<1$. We will show that  $h$ is bounded above in $\widetilde{D}_1$ by a constant not depending on $h$. We show that if $h$ takes  a very large value in $\widetilde{D}_1$, then we can find a sequence in $\widetilde{D}_2$ for which $h$ is unbounded.  \\
 Let $(x,s)\in \widetilde{D}_1$ such that $h(x,s)=K$. $K$ will be chosen later and we can take $K$ large. Let $\epsilon>0$ be small. By the Lemma (\ref{harnack_lemma_1}), there is $c_2$ such that if $(x,s)\in \mathbb{R}^d\times \mathbb{R}^+$, $r>0$, $D_{r}$ denotes the rectangular box $D_r(x,s)$ about the center $(x,s)$, $D^\epsilon_{r}$ denotes the rectangular box with $\epsilon$ margin $D^\epsilon_r(x,s)$, $H$ is bounded, non-negative and supported in $D_{2r}^c$ then for any $(y,t),\,(y',t')\in D^\epsilon_r$
\begin{align}
	 \E^{(y',t')}H(X_{\tau_{D_r}}) & \leq c_2 \, \E^{(y,t)}H(X_{\tau_{D_r}}).
\end{align}
By Theorem (\ref{thm_box}),  
\begin{align}\label{harnack_eq_1}
	\P^{(y_0,t_0)}(T_{{D}_{2r/3}}<\tau_{\widetilde{D}_{16}})&\geq c_3 \, |{D}_{2r/3}|.
\end{align}

By Corollary (\ref{harnack_corollary}), there is $c_4$ such that if $C\subset D_{r/3}$ is a compact set with $\displaystyle \frac{|C|}{|D_{r/3}|}\geq \frac{1}{3}$, then 
\begin{align}\label{harnack_eq_11}
	\P^{(y,t)}(T_C<\tau_{D_r})&\geq c_4 \quad (y,t) \in D_{2r/3}.
\end{align}

Now let $\disp \eta=\frac{c_4}{3}$, and $\disp \zeta=\frac{1}{3}\wedge (c_2^{-1}\eta)$. Let $r$ be so that $\disp |D_{2r/3}|=\frac{2}{c_3\, c_4 \,\zeta K}$. Note that in this case $\disp r^{\frac{2d}{\alpha}+1}=c_5\,K^{-1}$, and so $\disp r=c_5'\,K^{-\alpha/{(2d+\alpha)}}$. Here we note that by taking a large enough value for $K$, we keep $r$ small (that is $r<\frac{1}{32}$) and so the rectangular boxes stay in the upper half space.\\ \\
Let  $A'=\{ (u,v)\in D_{r/3}: h(u,v)\geq \zeta K \}$. We claim that 
\begin{align*}
	\frac{|A'|}{|D_{r/3}|} &\leq\frac{1}{2}. 
\end{align*}
If not, then there is a compact subset $A$ such that 
\begin{align*}
	\frac{|A|}{|D_{r/3}|} &\geq\frac{1}{3}. 
\end{align*}
 Then (\ref{harnack_eq_1}) and (\ref{harnack_eq_11}) imply that 
\begin{align}\label{measure_dominance}
	1\geq\,h(y_0,t_0) & \geq \E^{(y_0,t_0)}[h(X_{T_A\wedge \tau_{\widetilde{D}_{16}}});\, T_A < \tau_{\widetilde{D}_{16}}] \notag  \\ 
				   &  \geq \zeta \, K \,\E^{(y_0,t_0)}\left[ \P^{X_{T_{D_{2r/3}}}}[ T_A < \tau_{\widetilde{D}_{16}}] \,;T_{D_{2r/3}} < \tau_{\widetilde{D}_{16}}\right]\notag\\
				&  \geq \zeta \, K \,\E^{(y_0,t_0)}\left[ \P^{X_{T_{D_{2r/3}}}}[ T_A < \tau_{{D}_{r}}] \,;T_{D_{2r/3}} < \tau_{\widetilde{D}_{16}}\right]\\ 
				&  \geq \zeta \, K \,c_4\,\P^{(y_0,t_0)}\left[ T_{D_{2r/3}} < \tau_{\widetilde{D}_{16}}\right]\notag\\  
				   &  \geq \zeta \, K \,c_3\,c_4\, |{D}_{2r/3}|=2.\notag 
\end{align}
This contradiction shows that 
\begin{align*}
	\frac{|A'|}{|D_{r/3}|} &\leq\frac{1}{2}. \\
\end{align*}
Let $C$ be a compact set in $D_{r/3} - A'$ such that 
$$
	\frac{|C|}{|D_{r/3}|} \geq \frac{1}{3} .
$$
Let $H=\ind_{D^c_{2r}}h$. If $\E^{(x,s)}[h(X_{\tau_{D_r}}); X_{\tau_{D_r}} \not\in D_{2r}]>\eta K$ then for any $(y,t) \in D_{r/3}$
\begin{align*}
	h(y,t) & =\E^{(y,t)}[h(X_{\tau_{D_r}})]\geq  \E^{(y,t)}[h(X_{\tau_{D_r}}); X_{\tau_{D_r}} \not\in D_{2r}] =\E^{(y,t)}[H(X_{\tau_{D_r}})]\\ 
		 &\geq c_2^{-1} \, \E^{(x,s)}[H(X_{\tau_{D_r}})]\geq c_2^{-1} \eta \, K \geq \zeta \,K.
\end{align*}
But this is a contradiction, since $A'$ is not all of $D_{r/3}$. So we must have $$\E^{(x,s)}[h(X_{\tau_{D_r}}); X_{\tau_{D_r}} \not\in D_{2r}]\leq\eta K.$$
Let's denote the supremum of $h$ on $D_{2r}$ by $M$. Then
\begin{align*}
	K&=h(x,s) =\E^{(x,s)}[h(X_{T_C\wedge \tau_{D_r}})] \\ 
	               &=\E^{(x,s)}[h(X_{T_C}); T_C< \tau_{D_r}] + \E^{(x,s)}[h(X_{ \tau_{D_r}});\, T_C\geq \tau_{D_r}, X_{\tau_{D_r}} \in D_{2r}] \\ 			&\quad + \E^{(x,s)}[h(X_{ \tau_{D_r}});\, T_C\geq \tau_{D_r}, X_{\tau_{D_r}} \not\in D_{2r}]  \\ 
	               &\leq \zeta\,K\, \P^{(x,s)}[ T_C< \tau_{D_r}]  + M \, \P^{(x,s)}[ \tau_{D_r}\leq T_C]+\eta\,K \\ 
	               &\leq \zeta\,K\, \P^{(x,s)}[ T_C< \tau_{D_r}]  + M \, (1-\P^{(x,s)}[ T_C <  \tau_{D_r}])+\eta\,K. 
\end{align*}
So 
$$
	\frac{M}{K}\geq \frac{1-\eta-\zeta\,\P^{(x,s)}[ T_C< \tau_{D_r}]}{1-\P^{(x,s)}[ T_C< \tau_{D_r}]}>1.
$$
Then there is a $\beta>0$ such that $M>(1+2\beta)\,K$, and, as a result, there is $(x',s')\in D_{2r}$ such that $h(x',s')\geq (1+\beta )\,K$. 

Now suppose there is $(x_1,s_1)\in \widetilde{D}_1$ with $h(x_1,s_1)=K_1$. We can then find $r_1$ as above. And there is $(x_2,s_2)\in D_{2r_1}(x_1,s_1)$ with $h(x_2,s_2)=K_2\geq(1+\beta)\,K_1$. By induction we can create a sequence $\{(x_i,s_i)\}$ and corresponding $\{K_i\}$, $\{r_i\}$ so that $(x_{i+1},s_{i+1})\in D_{2r_i}(x_i,s_i)$ and $K_i\geq (1+\beta)^{i-1}K_1$. Note that 
$$
	\sum_i |x_{i+1}-x_{i}| \leq c_6 \,K_1^{-2/(2d+\alpha)} \quad \mbox{ and } \quad \sum_i |s_{i+1}-s_{i}| \leq c_7 \,K_1^{-\alpha/(2d+\alpha)}.
$$
So if $K_1 > (2c_6)^{d+\frac{\alpha}{2}}$ and $K_1 > (2c_7)^{\frac{2d}{\alpha}+1}$ then $(x_{i},s_{i})$'s are in $\widetilde{D}_2$, and $h(x_{i},s_{i})\geq (1+\beta)^{i-1}K_1$ which contradicts the fact that  $h$ is bounded. Hence $K_1\leq (2c_6)^{d+\frac{\alpha}{2}} \vee  (2c_7)^{\frac{2d}{\alpha}+1}=c$, and 
$$
	\sup_{\widetilde{D}_1}h\leq c.
$$
\end{proof}




\section{Regularity Results}\label{regularity}

One of the important application of the Harnack inequality is the regularity for the solution of elliptic PDEs. Bass and Levin [\ref{bass-levin}] develop some techniques to prove H\"older continuity and regularity results of some integral operators. They applied these techniques in the case of a jump Markov process whose kernel is comparable to that of a symmetric stable process. We will follow their method and show regularity results for our $\lambda$-resolvent.

We call the operator 
$$U_\lambda f(x,t)=\E^{(x,t)} \int_0^\infty e^{-\lambda s} f(X_s) ds=\int_0^\infty e^{-\lambda s} Q_sf(x,t)ds$$
as the $\lambda$-resolvent of $f$. Resolvent plays an important role in applications. We study some properties of this resolvent.

\begin{theorem}
If $f$ is a bounded function on $\Rd \times \Rp$ and it is harmonic on $D_4(x,t)\subset\Rd\times\Rp$, then $f$ is H\"older continuous in $D_1(x,t)$, that is, there exists $\gamma >0$ such that
\begin{align}\label{holder}
	\displaystyle |f(y,s)-f(y',s')| \leq c || f ||_\infty |(y,s)-(y',s')|^\gamma , \qquad (y,s),(y',s') \in D_1(x,t).
\end{align}
\end{theorem}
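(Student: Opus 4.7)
The plan is to prove H\"older continuity by a dyadic oscillation-reduction scheme in the spirit of Krylov-Safonov and \cite{Bass_Levin}, driven by Theorem \ref{harnack_inequality} and the hitting-probability estimate of Corollary \ref{harnack_corollary}. Fix a reference point $(y_0,s_0) \in D_1(x,t)$ and, for $k \geq 0$, set $D_k = D_{R_k}(y_0,s_0)$ with $R_k = 16^{-k} R_0$, where $R_0$ is chosen so that $D_{32 R_0}(y_0,s_0) \subset D_4(x,t)$, so that the scale-invariant hypotheses of Theorem \ref{harnack_inequality} are met uniformly in $k$. Write $M_k = \sup_{D_k} f$ and $m_k = \inf_{D_k} f$. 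The goal is a geometric oscillation bound $M_k - m_k \leq c\,\rho^k \|f\|_\infty$ for some fixed $\rho \in (0,1)$, which translates directly into (\ref{holder}) with exponent $\gamma = -\log\rho/\log 16$ after comparing any two points in $D_1(x,t)$ across matching dyadic scales.

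For the one-step oscillation decrement, set $\bar f = (M_k+m_k)/2$. One of the level sets $\{f \geq \bar f\} \cap D_{k+1}$ or $\{f \leq \bar f\} \cap D_{k+1}$ has Lebesgue measure at least $|D_{k+1}|/2$; pass to a compact subset $A$ of comparable size in the larger one, and apply Corollary \ref{harnack_corollary} in rescaled form to obtain $\delta > 0$, independent of $k$, with $\P^{(y,s)}(T_A < \tau_{D_{k-1}}) \geq \delta$ for all $(y,s) \in D_k$. Since $D_{k-1}$ is bounded away from $\Rd \times \{0\}$, we have $\tau_{D_{k-1}} < T_0$ almost surely, so the martingale property of harmonic functions together with optional stopping yields
$$
 f(y,s) = \E^{(y,s)}\bigl[ f(X_{T_A \wedge \tau_{D_{k-1}}}) \bigr], \qquad (y,s) \in D_k.
$$
Splitting this expectation on $\{T_A < \tau_{D_{k-1}}\}$ versus its complement, using $f \geq \bar f$ on $A$, and running the same argument with $-f$ and the opposite level set, produces a two-sided bound of the form
$$
 M_k - m_k \leq (1-\delta)(M_{k-1}-m_{k-1}) + E_{k-1},
$$
where $E_{k-1}$ encodes the contribution from the event that the process exits $D_{k-1}$ by a large jump of the $Y$-component.

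The main obstacle is controlling $E_{k-1}$: because $Y_t$ has heavy-tailed jumps, the process can exit $D_{k-1}$ by landing anywhere in $\Rd\times\Rp$, and a priori $|f|$ there is only bounded by $\|f\|_\infty$. The standard remedy, for which Lemma \ref{harnack_lemma_1} and the L\'evy system formula are essentially tailor-made, is to partition $\Rd\times\Rp \setminus D_{k-1}$ into the dyadic shells $D_{k-j-1} \setminus D_{k-j}$ for $j \geq 1$, bound the probability that the exit jump lands in the $j$-th shell via the L\'evy system estimate from Lemma \ref{harnack_lemma_1}, and on each shell control $|f|$ by the already-established coarser-scale oscillation $M_{k-j-1}-m_{k-j-1}$. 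Summing the resulting geometric series, and invoking the induction hypothesis at the larger scales, lets me absorb $E_{k-1}$ into the contracting term on the right-hand side provided $\rho$ is chosen close enough to $1$ but strictly less than $1$; this closes the induction. The base case $M_0 - m_0 \leq 2\|f\|_\infty$ is trivial, and the announced H\"older estimate (\ref{holder}) then follows at once from the geometric decay of the oscillation.
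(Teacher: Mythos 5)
Your argument is the same oscillation-reduction scheme the paper uses: dyadic boxes around a fixed centre, a majority level set $A$ hit with probability $\geq\delta$ via Corollary \ref{harnack_corollary}, and a shell decomposition of the heavy-tailed exit controlled by a L\'evy-system bound on the jump-out probability, closed by a geometric series. This matches the paper's split of $f(z,u)-f(z',u')$ into $I_1$ (hit $A$), $I_2$ (exit into the adjacent box), and $I_3$ (exit by a long jump, summed over shells) essentially term for term.

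The one substantive issue is where you fix the scale ratio $16^{-1}$ a priori, citing Theorem \ref{harnack_inequality}. The Harnack inequality is in fact not used anywhere in this proof; everything runs off Corollary \ref{harnack_corollary} and the ratio-of-boxes estimate $\P^{(y,s)}(X_{\tau_{D_r}}\not\in D_{r'})\leq c_1(r/r')^2$, which is the paper's display (\ref{ratio_of_boxes}). More importantly, the scale ratio $\sigma$ cannot be assigned in advance. The hitting estimate yields a fixed decrement $\delta=\delta(c_0)$, while the shell series contributes an additive error of size roughly $c_1\sigma^2$ times the coarser-scale oscillation. Closing the induction requires, in effect, $c_1\sigma^2<\delta$, which forces $\sigma$ small \emph{depending on} $c_0$ and $c_1$; you cannot compensate by pushing $\rho$ toward $1$, because $\rho$ is not free either: it is pinned to roughly $1-\delta+O(c_1\sigma^2)$, and if the jump error swamps $\delta$ you end up with $\rho\geq 1$ and no contraction at all. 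The paper avoids this by first fixing the oscillation ratio $\beta$ from $\beta^2\geq 1-c_0/4$ and only then choosing the scale ratio $\theta\leq(\beta c_0/(8c_1))^{1/2}\wedge(\beta/2)^{1/2}\wedge 1/3$, so that the shell series $I_3$ is guaranteed $\leq(c_0/4)\beta^{k-1}$. Leave your scale ratio as a free parameter, tune it after $c_0$ and $c_1$ are in hand, and your plan becomes the paper's proof.
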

\begin{proof}
	Without loss of generality by taking a constant multiple, we may assume that $|| f ||_\infty=1$. First, we pick two points $(y,s),(y',s') \in D_1(x,t)$. Observing that $D_2(y,s) \subset D_4(x,t)$, $f$ is harmonic on $D_2(y,s)$ by the hypothesis. It is enough to consider the case when $(y',s') \in D_1(y,s)$. Because, otherwise, $ |(y,s)-(y',s')| \geq 1$ and hence
	$$|f(y,s)-f(y',s')| \leq 2 \leq 2  |(y,s)-(y',s')|^\gamma.$$
	By corollary \ref{harnack_corollary}, there is $c_0>0$ such that for any compact set $A \subset D_{r/3}(y,s)$ with $|A| \geq |D_{r/3}(y,s)|/3$ and $r\leq 1/3$ (so that $D_{2r}(y,s)\subset \Rd\times\Rp$)
	\begin{align}
		\P^{(y,s)}(T_A < \tau_{D_{r}(y,s)})\geq c_0.
	\end{align}
	We fix $c_0$ and choose $\beta \in (0,1)$  close enough to 1 so that $\beta^2 \geq (1-c_0/4) >0.$  Arguing as in lemma \ref{harnack_lemma_1}, if we take $H=\ind_{[\Rd\times\Rp - D_{r'}(y,s)]}$ such that $r' \geq 2r$ then for $u>0$
	\begin{align*}
		\P^{(y,s)}(X_{\tau_{D_{r}(y,s)} \wedge u} \not\in D_{r'}(y,s)) & = \E^{(y,s)}\left[ H(X_{\tau_{D_{r}(y,s)} \wedge u} ) \right] \\
			& \leq \E^{(y,s)}\left[ \sum_{v\leq \tau_{D_{r}(y,s)} \wedge u}  \ind_{\{ Y_v \not= Y_{v-}, Y_v \not\in \overrightarrow{D}_{r'}(y,s) \}} \right] .
	\end{align*}
	By Levy system formula and the fact that $|Y_v-h|$ is bounded below for $h\not\in \overrightarrow{D}_{r'}(y,s)$ and $Y_v \in \overrightarrow{D}_{r}(y,s)$, the last expression is bounded by
	\begin{align*}
		\E^{(y,s)} \int_0^{\tau_{D_{r}(y,s)} \wedge u} \int_{\Rd\times\Rp - D_{r'}(y,s)} \frac{1}{|h-Y_v|^{d+\alpha}} \, dh \, dv \leq c (r')^{-2} \E^{(y,s)}[\tau_{D_{r}(y,s)} \wedge u].
	\end{align*}
	Let $u\rightarrow \infty$. By lemma \ref{exp_leq_r2}, we obtain
	\begin{align}\label{ratio_of_boxes}
		\P^{(y,s)}(X_{\tau_{D_{r}(y,s)} \wedge u} \not\in D_{r'}(y,s)) & \leq c_1 \, \left(\frac{r}{r'} \right)^2.
	\end{align}
	We note that this inequality is valid even if $r'$ is large and $D_{r'}(y,s)\not\subset\Rd\times\Rp$, since the event considers only the jumps of $X_t$ from $D_r(y,s)$ to $\Rd\times\Rp-D_{r'}(y,s)$ and a jump may occur only in the horizontal direction. Now let $$\theta=\left(\frac{\beta}{2} \right)^{1/2}\wedge \left(\frac{\beta c_0}{8 c_1} \right)^{1/2} \wedge \left(\frac{1}{3} \right)$$
	and denote infimum and supremum over nested rectangular boxes by
	$$a_k=\inf_{D_{\theta^k}(y,s)} f \quad \mbox{and} \quad b_k=\sup_{D_{\theta^k}(y,s)} f .$$
	We will show that $b_k-a_k\leq \beta^k$ by induction. First of all, it is clear that the base step $b_0-a_0\leq \beta^0$ holds. Assume this inequality also holds for any $i\leq k$, that is  $b_i-a_i\leq \beta^i$. Now let $A'=\{f\leq (a_{k+1}+b_{k+1})/2\} \cap D_{\theta^{k+1}}(y,s)$. We may assume $|A'|\geq |D_{\theta^{k+1}}(y,s)|/2$. Otherwise we can work with $1-f$ instead of $f$. Take a compact set $A\subset A'$ so that $|A|\geq |D_{\theta^{k+1}}(y,s)|/3$.\\

	 Let  $\epsilon>0$ and pick $(z,u), (z',u') \in {D_{\theta^{k+1}}(y,s)}$ so that
	$$f(z,u) < a_{k+1}+\epsilon \quad \mbox{and} \quad f(z',u') > b_{k+1}-\epsilon .$$
	Since $f$ is harmonic in $D_{2}(y,s)$,
	\begin{align*}
		f(z,u)-f(z',u')&=\E^{(z,u)}\left[ f(X_{\tau_{D_{\theta^k}(y,s)}}) -f(z',u') \right] \\
				&=I_1+I_2+I_3.
	\end{align*} 
	where
	\begin{align*}
		I_1&=\E^{(z,u)}\left[ f(X_{\tau_{A}}) -f(z',u')  ;  \tau_A <  \tau_{D_{\theta^k}(y,s)} \right] \\
		I_2&= \E^{(z,u)}\left[ f(X_{ \tau_{D_{\theta^k}(y,s)}}) -f(z',u')  ;  \tau_A >  \tau_{D_{\theta^k}(y,s)} , X_{\tau_{D_{\theta^k}(y,s)}} \in D_{\theta^{k-1}}(y,s)\right] \\
		I_3&= \sum_{i=1}^\infty \E^{(z,u)}\left[ f(X_{ \tau_{D_{\theta^k}(y,s)}}) -f(z',u')  ;  \tau_A >  \tau_{D_{\theta^k}(y,s)} ,\right. \\
			&\qquad \qquad \qquad \qquad \qquad \qquad \qquad  \left.X_{\tau_{D_{\theta^k}(y,s)}} \in D_{\theta^{k-1-i}}(y,s)- D_{\theta^{k-i}}(y,s)\right]  .
	\end{align*}
	We will bound each term in terms of powers of $\beta$. First we note 
	\begin{align*}
		I_1&\leq \left(\frac{a_{k+1}+b_{k+1}}{2}-a_{k+1}\right) \P^{(z,u)}\left(\tau_A <  \tau_{D_{\theta^k}(y,s)} \right)  \\
			& \leq \left(\frac{b_{k-1}-a_{k-1}}{2} \right) \,   \P^{(z,u)}\left(\tau_A <  \tau_{D_{\theta^k}(y,s)} \right)  \\
			&\leq  \frac{\beta^{k-1}}{2}\, \P^{(z,u)}\left(\tau_A <  \tau_{D_{\theta^k}(y,s)} \right) 
	\end{align*}
	and
	\begin{align*}
		I_2& \leq \left({b_{k-1}-a_{k-1}} \right) \,   \P^{(z,u)}\left(\tau_A >  \tau_{D_{\theta^k}(y,s)} \right)  \\
			&\leq \beta^{k-1}\, \P^{(z,u)}\left(\tau_A >  \tau_{D_{\theta^k}(y,s)} \right) .
	\end{align*}
	Hence
	\begin{align}\label{I1_I2}
		I_1+I_2&\leq\beta^{k-1}\left[\frac{1}{2}\P^{(z,u)}\left(\tau_A <  \tau_{D_{\theta^k}(y,s)} \right) +1-\P^{(z,u)}\left(\tau_A <  \tau_{D_{\theta^k}(y,s)} \right) \right] \notag \\
			&\leq \beta^{k-1}(1-\frac{c_0}{2}).
	\end{align}
	For the third integral, we will use (\ref{ratio_of_boxes}), the fact that $\theta^2 \leq \beta/2$ so that $(1-(\theta^2/\gamma))^{-1}\leq 2$ and $\theta^2 \leq (\beta c_0)/(8c_1)$. Then
	\begin{align}\label{I3}
		I_3&\leq \sum_{i=1}^\infty  \left({b_{k-1-i}-a_{k-1-i}} \right)\,  \P^{(z,u)}\left( X_{\tau_{D_{\theta^k}(y,s)}} \in D_{\theta^{k-1-i}}(y,s)- D_{\theta^{k-i}}(y,s)\right)  \notag \\
		&\leq \sum_{i=1}^\infty  \beta^{k-1-i}\,  \P^{(z,u)}\left( X_{\tau_{D_{\theta^k}(y,s)}} \in D_{\theta^{k-1-i}}(y,s)- D_{\theta^{k-i}}(y,s)\right) \notag \\
		&\leq c_1 \sum_{i=1}^\infty  \beta^{k-1-i}\, \left( \frac{\theta^k}{\theta^{k-i}} \right)^2\notag \\
		&= c_1 \,  \beta^{k-1}\,\sum_{i=1}^\infty  \left( \frac{\theta^2}{\beta} \right)^i\\
		&\leq 2c_1\beta^{k-2} \theta^2\notag \\
		&\leq (c_0/4)\beta^{k-1}.\notag 
	\end{align}
	So by (\ref{I1_I2}) and (\ref{I3}),
	\begin{align*}
		f(z,u)-f(z',u')&\leq \beta^{k-1} (1-\frac{c_0}{4})\leq\beta^{k+1}.
	\end{align*} 
	This shows that $b_{k+1}-a_{k+1}\leq \beta^{k+1}$. \\
	Finally, fix $k$ so that $(y',s') \in D_{\theta^{k}}(y,s)- D_{\theta^{k+1}}(y,s) $. We observe that
	$$|(y,s)-(y',s')|\geq(\theta^{k+1}\wedge [\theta^{k+1}]^{2/\alpha})= [\theta^{k+1}]^{2/\alpha}$$
	since $(y',s') \not\in D_{\theta^{k+1}}(y,s) $ and $2/\alpha>1$. Hence $$\log{|(y,s)-(y',s')|} \geq (k+1)(2/ \alpha) \log(\theta).$$ Then
	\begin{align*}
		|f(y,s)-f(y',s')| & \leq e^{k \log(\beta)} \\ &\leq  e^{(\alpha/2)\log(\beta)\log{|(y,s)-(y',s')|}/ \log(\theta) } \\&=|(y,s)-(y',s')|^\gamma
	\end{align*}
	where $\gamma=(\alpha/2)\log(\beta)/ \log(\theta)$. This is the desired result.
\end{proof}

Having the H\"older continuity of harmonic function, we can discuss the continuity of the resolvent. As we defined before, $\lambda$-resolvent of $f$ is given by
$$U_\lambda f(x,t)=\E^{(x,t)} \int_0^\infty e^{-\lambda s} f(X_s) ds.$$
We will show that 
\begin{align}\label{U_ineq} |U_\lambda f(y,s)-U_\lambda f(y',s')|\leq c ||f||_\infty(|(y,s)-(y',s')| \wedge 1)^\gamma.\end{align}
First, we recall the resolvent equation
\begin{align}\label{res_eqn}
	(\beta-\lambda) U_\lambda U_\beta = U_\lambda - U_\beta.
\end{align}
It is easy to verify this equation due to the semi-group property of $Q_s$. This resolvent inequality reduces (\ref{U_ineq}) to the case $\lambda=0$.  So we will prove the following theorem first, and then we prove (\ref{U_ineq}) by using this theorem.
\begin{theorem}
	Suppose $f$ is a bounded function on $\Rd\times\Rp$ with compact support. Then there exists $\gamma \in (0,1)$ such that
	$$|U_0f(y,s)-U_0f(y',s')| \leq c \left( ||f||_\infty + ||U_0f||_\infty \right)(|(y,s)-(y',s')| \wedge 1)^\gamma$$
\end{theorem}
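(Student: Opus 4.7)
The plan is to follow the Bass--Levin regularity scheme used earlier in the paper: decompose $U_0 f$ near a base point into a local potential part and a harmonic part, estimate each separately, and then optimize the splitting radius.

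Fix $p=(y,s)$ and $p'=(y',s')$, and set $\rho=|p-p'|$. Since $|U_0 f(p)-U_0 f(p')|\le 2\|U_0 f\|_\infty$, the bound is trivial when $\rho\ge 1$, so I would assume $\rho\le 1$. I would choose a parameter $r\in(0,1)$ (to be fixed at the end) and a box $B=D_r(p)$ with $D_{4r}(p)\subset\Rd\times\Rp$; points $p$ whose vertical coordinate forces $r$ to be comparable to $\rho$ can be handled either by re-centering the box inside the half-space or by absorbing the estimate into the trivial bound via the compact support of $f$. Applying the strong Markov property at $\tau_B$, I decompose, for $z\in B$,
$$
U_0 f(z)=\E^z\!\int_0^{\tau_B}\!f(X_u)\,du+\E^z[U_0 f(X_{\tau_B})]=:v(z)+w(z).
$$

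For the local piece, Lemma \ref{exp_leq_r2} gives $|v(z)|\le\|f\|_\infty\E^z[\tau_B]\le c\|f\|_\infty r^2$, hence $|v(p)-v(p')|\le 2c\|f\|_\infty r^2$. For the harmonic piece, $w$ is bounded by $\|U_0 f\|_\infty$ and, by the strong Markov property, is harmonic on $B$. Because the product process scales as $Y_{r^2 u}\stackrel{d}{=}r^{2/\alpha}Y_u$ and $Z_{r^2 u}\stackrel{d}{=}rZ_u$ -- precisely the scaling that maps $D_r$ onto the unit box $D_1$ -- the preceding H\"older theorem, transferred to the rescaled $w$, yields
$$
|w(p)-w(p')|\le c\|U_0 f\|_\infty\bigl(\rho/r^{2/\alpha}\bigr)^{\gamma},
$$
where $\gamma$ denotes the exponent supplied by that theorem; this is valid provided $\rho\lesssim r^{2/\alpha}$, so that $p'\in D_{r/2}(p)$.

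The final step is to balance $r^2$ against $(\rho/r^{2/\alpha})^{\gamma}$ by taking $r=\rho^{\alpha\gamma/(2\alpha+2\gamma)}$; both terms then equal a constant multiple of $\rho^{\alpha\gamma/(\alpha+\gamma)}$, and the side condition $\rho\le c r^{2/\alpha}$ reduces to $\rho^{\alpha/(\alpha+\gamma)}\le c$, which holds for $\rho\le 1$. Relabelling the H\"older exponent $\gamma':=\alpha\gamma/(\alpha+\gamma)\in(0,\gamma)$ gives the required inequality with constant $c(\|f\|_\infty+\|U_0 f\|_\infty)$. The main obstacle is the anisotropy of the boxes $D_r$ (horizontal half-side $r^{2/\alpha}$ versus vertical half-side $r$): one must transfer the H\"older theorem to arbitrary scale carefully and compare $\rho$ to $r^{2/\alpha}$ rather than $r$, which is exactly why the exponent shrinks from $\gamma$ to $\alpha\gamma/(\alpha+\gamma)$. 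A secondary issue is the hypothesis $D_{4r}(p)\subset\Rd\times\Rp$, which fails for $p$ near the boundary $\Rd\times\{0\}$ and requires either re-centering the box inside the half-space or a direct appeal to the compact support of $f$ to close off the argument there.
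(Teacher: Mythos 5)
Your proof follows the same strategy as the paper: decompose $U_0 f$ via the strong Markov property at $\tau_{D_r}$ into a local potential piece (bounded by $c\|f\|_\infty r^2$ via Lemma \ref{exp_leq_r2}) and a harmonic piece (bounded via the preceding H\"older theorem after rescaling), then choose $r$ in terms of $\rho=|p-p'|$. The only cosmetic difference is that you optimize $r$ to balance the two terms and obtain exponent $\alpha\gamma/(\alpha+\gamma)$, whereas the paper simply sets $r=\rho^{\alpha/4}$ and obtains $(\alpha\wedge\gamma)/2$; both suffice for the stated conclusion.
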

\begin{proof}
 First of all, without loss of generality we may assume $D_4(y,s)\subset\Rd\times\Rp$ by scaling property. We observe that if $|(y,s)-(y',s')| \geq 1$, the result follows immediately. So we assume $|(y,s)-(y',s')| < 1$, that is $(y',s')\in D_1(y,s)$. By definition
\begin{align*}
	U_0f(y,s)=\E^{(y,s)} \int_0^\infty  f(X_t) dt
\end{align*}
which can be written as 
\begin{align*}
	U_0f(y,s)=\E^{(y,s)} \int_0^{\tau_{D_r(y,s)}}  f(X_t) dt + \E^{(y,s)} \int_{\tau_{D_r(y,s)}}^\infty  f(X_t) dt.
\end{align*}
$r<1$ will be chosen later. (We only assume $r<1$.) By strong Markov property, the second term is equal to
\begin{align*}
	\E^{(y,s)} \int_{0}^\infty  f(X_{t+\tau_{D_r(y,s)}}) dt & =\E^{(y,s)} \E^{(y,s)}\left[ \left.\int_{0}^\infty   f(X_{t+\tau_{D_r(y,s)}}) dt \right| \tau_{D_r(y,s)} \right]\\
			&=\E^{(y,s)}\left[  U_0f( X_{\tau_{D_r(y,s)}})\right].
\end{align*}
Similarly,
\begin{align*}
	U_0f(y',s')=\E^{(y',s')} \int_0^{\tau_{D_r(y,s)}}  f(X_t) dt + \E^{(y',s')} \left[  U_0f( X_{\tau_{D_r(y,s)}})\right].
\end{align*}
Hence
\begin{align*}
	|U_0f(y,s)-U_0f(y',s')| &\leq ||f||_\infty \left[ \E^{(y,s)} \tau_{D_r(y,s)} +\E^{(y',s')} \tau_{D_r(y,s)}  \right] + \\
			& \left|\E^{(y,s)} \left[  U_0f( X_{\tau_{D_r(y,s)}})\right]-  \E^{(y',s')} \left[  U_0f( X_{\tau_{D_r(y,s)}})\right]\right|.
\end{align*}
Note that the first term is bounded by $c \, r^2\, ||f||_\infty  $. We also note that the function $(z,u)\rightarrow\E^{(z,u)} \left[  U_0f( X_{\tau_{D_r(y,s)}})\right]$ is harmonic in $D_r(y,s)$. So it is H\"older continuous by previous theorem. Then the second term on the right is bounded by
 \begin{align*}
	 c ||U_0f||_\infty\left[ \frac{|(y,s)-(y',s')|}{r\wedge r^{2/\alpha}} \right]^\gamma
\end{align*}
after scaling.
Now let $r=|(y,s)-(y',s')|^{\alpha/4}$. Since $|(y,s)-(y',s')|<1$, we have $|(y,s)-(y',s')|^{1/2}\leq |(y,s)-(y',s')|^{\alpha/4}$, and hence 
\begin{align*}
	|U_0f(y,s)-U_0f(y',s')| &\leq c\,  ||f||_\infty|(y,s)-(y',s')|^{\alpha/2} +c' \, ||U_0f||_\infty |(y,s)-(y',s')|^{\gamma/2} \\
			&\leq c''\, (  ||f||_\infty+||U_0f||_\infty) |(y,s)-(y',s')|^{(\alpha \wedge \gamma)/2} .
\end{align*}
\end{proof}

\begin{theorem}
	Suppose $f$ is bounded and $\lambda>0$. Then we have
	$$|U_\lambda f(y,s)-U_\lambda f(y',s')| \leq c ||f||_\infty (|(y,s)-(y',s')| \wedge 1)^\gamma$$
\end{theorem}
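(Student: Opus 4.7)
The plan is to imitate the preceding $U_0$ theorem, carrying the exponential weight $e^{-\lambda t}$ through the argument and using the elementary bound $\lambda\|U_\lambda f\|_\infty\leq\|f\|_\infty$ to absorb factors of $\|U_\lambda f\|_\infty$ into multiples of $\|f\|_\infty$. The case $|(y,s)-(y',s')|\geq 1$ is immediate since $\|U_\lambda f\|_\infty\leq\|f\|_\infty/\lambda$; so I assume $(y',s')\in D_1(y,s)$ and, by the scaling of $X_t$, that $D_4(y,s)\subset\Rd\times\Rp$. Fix $r\in(0,1)$ to be chosen later and set $\tau=\tau_{D_r(y,s)}$.

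By the strong Markov property applied at $\tau$,
$$U_\lambda f(z,u)=\E^{(z,u)}\!\int_0^\tau e^{-\lambda t}f(X_t)\,dt\;+\;\E^{(z,u)}\bigl[e^{-\lambda\tau}U_\lambda f(X_\tau)\bigr]$$
for $(z,u)=(y,s)$ and $(z,u)=(y',s')$. The first integral is bounded in modulus by $\|f\|_\infty\,\E^{(z,u)}\tau\leq c\,r^2\|f\|_\infty$ via Lemma \ref{exp_leq_r2}, so its contribution to $|U_\lambda f(y,s)-U_\lambda f(y',s')|$ is at most $2c\,r^2\|f\|_\infty$. For the second term I write $e^{-\lambda\tau}=1-(1-e^{-\lambda\tau})$ and put $g(z,u):=\E^{(z,u)}[U_\lambda f(X_\tau)]$, so that
$$\E^{(z,u)}\bigl[e^{-\lambda\tau}U_\lambda f(X_\tau)\bigr]=g(z,u)-\E^{(z,u)}\bigl[(1-e^{-\lambda\tau})U_\lambda f(X_\tau)\bigr].$$
Since $1-e^{-\lambda\tau}\leq\lambda\tau$ and $\lambda\|U_\lambda f\|_\infty\leq\|f\|_\infty$, this remainder is bounded by $c\,\lambda r^2\|U_\lambda f\|_\infty\leq c\,r^2\|f\|_\infty$.

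For the principal piece $g$, I extend it outside $D_r(y,s)$ by $U_\lambda f$; the extension is bounded on $\Rd\times\Rp$ by $\|U_\lambda f\|_\infty\leq\|f\|_\infty/\lambda$, and a short strong Markov computation (as in the $U_0$ proof) shows that $g(X_{t\wedge\tau})$ is a $\P^{(z,u)}$-martingale, i.e., $g$ is harmonic on $D_r(y,s)$. Applying the preceding H\"older theorem, rescaled from $D_4$ to $D_r$ exactly as in the $U_0$ case, yields
$$|g(y,s)-g(y',s')|\leq c\,\|U_\lambda f\|_\infty\Bigl(\frac{|(y,s)-(y',s')|}{r\wedge r^{2/\alpha}}\Bigr)^{\gamma}.$$
Combining the three contributions and choosing $r=|(y,s)-(y',s')|^{\alpha/4}$, so that $r^2=|(y,s)-(y',s')|^{\alpha/2}$ and (since $\alpha<2$) $r\wedge r^{2/\alpha}=|(y,s)-(y',s')|^{1/2}$, we arrive at
$$|U_\lambda f(y,s)-U_\lambda f(y',s')|\leq c(\lambda)\,\|f\|_\infty\,|(y,s)-(y',s')|^{(\alpha\wedge\gamma)/2},$$
which is the claimed estimate after relabelling the H\"older exponent.

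The main obstacle is a matter of bookkeeping: one must confirm that the auxiliary function $g$ genuinely satisfies the hypothesis of the previous H\"older theorem after being extended across $\partial D_r(y,s)$ by $U_\lambda f$, and that the $D_4\to D_1$ scaling of that theorem transfers correctly to $D_r\to D_{cr}$ with the anisotropic factor $r\wedge r^{2/\alpha}$. Beyond that, the only new analytic input compared to the $U_0$ proof is the pair of elementary estimates $1-e^{-\lambda\tau}\leq\lambda\tau$ and $\lambda\|U_\lambda f\|_\infty\leq\|f\|_\infty$, which together let one dispense with the compact-support hypothesis and absorb all $\lambda$-dependence into a single constant $c(\lambda)$.
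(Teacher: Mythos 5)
Your proof is correct, but it takes a genuinely different route from the paper. The paper's proof is purely algebraic: it sets $g := f - \lambda U_\lambda f$, invokes the resolvent equation $(\beta-\lambda)U_\lambda U_\beta = U_\lambda - U_\beta$ to conclude $U_0 g = U_\lambda f$, checks that $\|g\|_\infty$ and $\|U_0 g\|_\infty = \|U_\lambda f\|_\infty$ are both controlled by $c\|f\|_\infty$, and then simply cites the already-proven $U_0$ theorem; no new probabilistic decomposition is made at all. You instead re-run the $U_0$ argument from scratch with the weight $e^{-\lambda t}$ carried along, splitting $e^{-\lambda\tau} = 1 - (1-e^{-\lambda\tau})$ and disposing of the remainder via $1-e^{-\lambda\tau}\le\lambda\tau$, Lemma~\ref{exp_leq_r2}, and $\lambda\|U_\lambda f\|_\infty\le\|f\|_\infty$. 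Your version is more computational but it buys something concrete: you never use the $U_0$ theorem as a black box, so you sidestep its compact-support hypothesis (and the approximation argument the paper suppresses behind ``it is enough to show this for positive and compactly supported functions''), and you avoid the question of whether $U_0 g$ is even well-defined for the non-compactly-supported signed function $g=f-\lambda U_\lambda f$. The paper's version buys brevity and a cleaner conceptual reduction. Both versions lean equally on the unproven-in-detail claim that $(z,u)\mapsto\E^{(z,u)}[U_\lambda f(X_{\tau_{D_r(y,s)}})]$ is harmonic in $D_r(y,s)$ and on the same loosely stated ``after scaling'' transfer of the H\"older theorem from the $D_4\to D_1$ pair to $D_r$, so your argument is exactly as rigorous as the $U_0$ proof it mirrors; you have introduced no new gap.
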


\begin{proof}
It is enough to show this for positive and compactly supported functions. Suppose $f>0$ and $f$ has compact support. Define the function $g$ by
\begin{align}
	g(y,s)=f(y,s)-\lambda U_\lambda f(y,s).
\end{align}
Note that
$$\lambda U_0U_\lambda f=U_0 f- U_\lambda f$$
by resolvent equation (\ref{res_eqn}) and hence 
$$U_0 g= U_0 f -\lambda U_0 U_\lambda f=U_\lambda f.$$
These equations can be verified by direct calculation. Using this equality, we obtain
$$||U_0 g||_\infty=||U_\lambda f ||_\infty\leq c \, ||f||_\infty$$
and 
$$||g||_\infty\leq   ||f||_\infty + \lambda ||U_\lambda f||_\infty\leq c \, ||f||_\infty.$$
Finally, by previous theorem 
\begin{align*}
	|U_\lambda f(y,s)-U_\lambda f(y',s')| & = |U_0 g(y,s)-U_0 g(y',s')| \\
			& \leq c \, \left( ||g||_\infty +  ||U_0 g||_\infty \right)(1\wedge |(y,s)-(y',s')|)^\gamma \\
			& \leq c \, ||f||_\infty (1\wedge |(y,s)-(y',s')|)^\gamma.
\end{align*}
\end{proof}





\section{Littlewood-Paley Functions}\label{LpTheory}
\label{sec:1}
This short section is about Littlewood-Paley functions obtained from the $\alpha$-harmonic extension of a function $f$ on $\Rd$. After defining G-functions in this context we state some earlier results by P.A. Meyer \cite{meyer_long,meyer_long_2,meyer_retour}  and N. Varopoulos \cite{varopoulos} and then we prove a result on a partial G-function which is close to the area functional used in the classical context. Let's denote the general Littlewood-Paley function by $G_f$ and define it by 
\begin{align*}
              \disp G_f(x) & = \disp \left[ \int_0^\infty t\, g(x,t) dt\right]^{1/2} \\ \\
                                    & =  \disp\left[ \int_0^\infty t\, \int \frac{[f_t(x+h)-f_t(x)]^2}{|h|^{d+\alpha}} dh \,dt + \int_0^\infty t \left[ \frac{\partial}{\partial t} f(x,t) \right]^2 dt \right]^{1/2}  
\end{align*}
where $f_t(x)=Q_tf(x)$ as defined in the section of preliminaries. As one can easily see, the integrand $g(x,t)$ is the square function which was defined before in (\ref{quad_var}). It has two components: one corresponding to the vertical process (Brownian motion) and the other one corresponding to the horizontal process (symmetric $\alpha$-stable process). Let's denote these components by $G_f^{\uparrow}$ and $\overrightarrow{G}_f$, respectively. Explicitly, these two functions are
$$
\disp \overrightarrow{G}_f(x) =\disp\left[ \int_0^\infty t\, \int \frac{[f_t(x+h)-f_t(x)]^2}{|h|^{d+\alpha}} dh \,dt  \right]^{1/2} \, ,
$$
and
\begin{align*}
\disp G^{\uparrow}_f(x) & =\disp\left[ \int_0^\infty t\, \left[ \frac{\partial}{\partial t} f(x,t) \right]^2 \,dt  \right]^{1/2} \,. 
\end{align*}

Some results on $L^p$-norms of these $G$-functions are known. P.A. Meyer worked in \cite{meyer_long_2} with symmetric Markov processes  and proved an $L^p$ inequality for the case $p\geq 2$. When applied to our setup, we can state it as $||G_f||_p \leq c ||f||_p$ for $p\geq 2$. On the other hand N. Varopoulos showed in his work \cite{varopoulos}  that this inequality can be extended to $p>1$ for the Brownian component, that is  $||G^\uparrow_f||_p \leq c ||f||_p$ for $p>1$. However the extension of this inequality for  the general $G$-function is not possible which is pointed by M. Silverstein \cite{meyer_correction}. Here we discuss the part of the horizontal component on a parabolic-like domain. Although the extension of this $L^p$ inequality is not true for $\overrightarrow{G}_f$ when $p\in (1,2)$, we can obtain a partial result considering the operator only inside a parabolic-like domain. In the classical context, the area functional is defined in a similar way on a cone with a vertex at a point $x\in \Rd$. Since we have different scaling factors on each component, we consider a modification of this domain and study the part of $G$-function on the set $\{(x+h,t)\in \Rd \times \Rp:|h|<t^{2/\alpha}\}$. For this purpose, define $\overrightarrow{G}_{f,\alpha}$ as

\begin{align*}
	\overrightarrow{G}_{f,\alpha}(x)  &  =\disp\left[ \int_0^\infty t\, \int_{\{|h|< t^{2/\alpha}\}} (f_t(x+h)-f_t(x))^2 \, \frac{dh}{|h|^{d+\alpha}} \,dt  \right]^{1/2} .
\end{align*}
First we note that the harmonic extension can be expressed as a convolution of the function with an approximate identity. To see this, we observe that transition densities of the symmetric stable process $Y_t$ satisfy the relation  $p(st^2,0,y)=t^{-2d/\alpha}p(s,0,yt^{-2/\alpha})$ by the scaling property. Hence we can write
$$
	f_t(x)=\int f(x-y) \,  \phi_{t}(y)\, dy = f * \phi_{t} (x)
$$
where 
$$
	\phi(x)=\int_0^\infty p(s,0,x) \mu_1(ds)
$$
and
$$
	\phi_t(x)=t^{-2d/\alpha}\phi(x/t^{2/\alpha}).
$$
So $f_t(x) \leq c\, \mathcal{M}(f)(x)$ \cite[section 2.1]{grafakos} where $\mathcal{M}(f)$ is the Hardy-Littlewood maximal function. To see this, it is enough to note that $\phi$ is radially decreasing and its $L^1$-norm equals one. Since the transition density $p(s,0,x)$ is obtained from the characteristic function $e^{-s|x|^\alpha}$ by the inverse Fourier transform, it follows trivially that $p(s,0,x)$ is a radial symmetric function. Moreover, by \cite[P.261]{sato} $p(s,0,x)$ can also be expressed as $\int_0^\infty (4\pi u)^{-d/2} e^{-|x|^2/(4u)} g_{\alpha/2}(s,u)du$ where  $ g_{\alpha/2}(s,u)$ is the density of an $\alpha/2$ stable subordinator whose Laplace transform is given by $\int_0^\infty e^{-\lambda v}g_{\alpha/2}(s,v)dv=e^{-s\lambda^{\alpha/2}}$. Hence $p(s,0,x)$ is radially decreasing in the variable $x$. The fact that $\phi$ is radially decreasing follows from the previous line.

 It is also known that 
$$
	\|\mathcal{M}(f)\|_p \leq c \, \|f\|_p, \quad \mbox{when} \quad f\in L^p  \mbox{ and } p\in (1,\infty).
$$

\begin{lemma}\label{f_norm} If $f\in L^p$, then for some $\xi=\xi(x,h,t)$ between $f_t(x)$ and $f_t(x+h)$,
	$$||f||_p^p\geq c \, \int \int_0^\infty t\, \int \xi^{p-2} (f(x+h)-f_t(x))^2 \, \frac{dh}{|h|^{d+\alpha}} \,dt \, dx.$$
\end{lemma}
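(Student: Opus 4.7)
The plan is to obtain the inequality by performing a double integration by parts in the vertical variable $t$, combined with the self-adjointness of $\Lgen$ written through the Dirichlet form representation. We may assume $f \geq 0$; otherwise replace $f$ by $|f|$ and note $Q_t|f| \geq |Q_tf|$, which only strengthens the resulting estimate. Set $u(t,x) = f_t(x) = Q_tf(x)$ and recall that $u$ is harmonic for $X$ in the interior. Applying Ito's formula to $u(X_t)$ (or equivalently, reading off the product generator $\Lgen + \tfrac{1}{2}\partial_{tt}$) one gets the pointwise identity
\begin{align*}
u_{tt}(t,x) = -\,2\,\Lgen u(t,x), \qquad t>0,\; x \in \Rd,
\end{align*}
where $\Lgen$ acts in the $x$-variable.

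First I would integrate by parts twice in $t$. For fixed $x$, writing $\phi_x(t)=u(t,x)^p$, one has
\begin{align*}
\int_0^T t\,\phi_x''(t)\,dt = T\phi_x'(T) - \phi_x(T) + \phi_x(0).
\end{align*}
Letting $T\to\infty$ and using decay of $Q_tf$ (from $f\in L^p$, the kernel representation $q_t = \int_0^\infty p(s,\cdot,0)\,\mu_t(ds)$, and the bounds (\ref{sas_estimate}) on $p(s,x,y)$), together with the strong continuity $Q_tf \to f$ as $t\to 0^+$, we obtain
\begin{align*}
\|f\|_p^p \;=\; \int_{\Rd}\int_0^\infty t\,(u^p)_{tt}(t,x)\,dt\,dx \;=\; \int_0^\infty t \int_{\Rd} (u^p)_{tt}(t,x)\,dx\,dt
\end{align*}
after Fubini.

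Next I would expand $(u^p)_{tt}$ pointwise and invoke the harmonicity identity:
\begin{align*}
(u^p)_{tt} \;=\; p(p-1)\,u^{p-2}u_t^2 + p\,u^{p-1}u_{tt} \;=\; p(p-1)\,u^{p-2}u_t^2 \;-\; 2p\,u^{p-1}\,\Lgen u.
\end{align*}
Integrate in $x$ and use that $\Lgen$ is symmetric on $L^2(\Rd)$, so that its Dirichlet form gives
\begin{align*}
-\int_{\Rd} u^{p-1}(x)\,\Lgen u(x)\,dx = \frac{c}{2}\int_{\Rd}\int_{\Rd}\frac{\bigl(u(x+h)-u(x)\bigr)\bigl(u^{p-1}(x+h)-u^{p-1}(x)\bigr)}{|h|^{d+\alpha}}\,dh\,dx.
\end{align*}
Applying the mean value theorem to $s\mapsto s^{p-1}$ produces $\xi = \xi(x,h,t)$ strictly between $f_t(x)$ and $f_t(x+h)$ with
\begin{align*}
u^{p-1}(x+h)-u^{p-1}(x) = (p-1)\,\xi^{p-2}\bigl(u(x+h)-u(x)\bigr).
\end{align*}

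Combining these computations, the $x$-integral of $(u^p)_{tt}$ becomes the sum of two non-negative terms,
\begin{align*}
p(p-1)\int_{\Rd} u^{p-2}u_t^2\,dx \;+\; c\,p(p-1)\int_{\Rd}\int_{\Rd}\frac{\xi^{p-2}\bigl(f_t(x+h)-f_t(x)\bigr)^2}{|h|^{d+\alpha}}\,dh\,dx,
\end{align*}
both non-negative whenever $p>1$. Dropping the first term, multiplying by $t$, and integrating in $t$ delivers the claimed inequality. The main obstacle I anticipate is the rigorous justification of the vanishing of the $t$-boundary contributions $T\phi_x'(T)$ and $\phi_x(T)$ at $T=\infty$ once we have also integrated in $x$; this needs quantitative decay of $Q_tf$ and $(Q_tf)_t$ uniform enough to apply dominated convergence, and follows from Young's inequality combined with $L^{p'}$-bounds on $q_t$ derived from (\ref{sas_estimate}) and (\ref{brownian_density}). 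A secondary nuisance is the interpretation of $\xi^{p-2}$ when $1<p<2$ at points where $u$ can vanish; this is handled by the pointwise identity $\xi^{p-2}(u(x+h)-u(x))^2 = \tfrac{1}{p-1}\bigl(u^{p-1}(x+h)-u^{p-1}(x)\bigr)\bigl(u(x+h)-u(x)\bigr)$, which is finite and non-negative by monotonicity of $s\mapsto s^{p-1}$.
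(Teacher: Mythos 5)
Your proposal reaches the same inequality by a genuinely different route. The paper's proof is fully probabilistic: apply It\^o's formula with $F(x)=(x+\epsilon)^p$ to the c\`adl\`ag martingale $M^f_{t}=f(X_{t\wedge T_0})$, drop the (nonnegative) continuous quadratic-variation term and keep the jump sum, take $\E^{m_a}$, convert the jump sum into a deterministic integral via the L\'evy system formula (Theorem~\ref{Levy_system_corollary}) together with the invariance of $P_t$ under Lebesgue measure, identify $\E^{m_a}[(M^f_{T_0})^p]$ with $\|f\|_p^p$ (Remarks~\ref{remark:4}--\ref{remark:5}), replace the time integral against the Brownian Green function, and let $a\to\infty$ to get the weight $t$. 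You instead read off the harmonicity PDE $u_{tt}=-c\,\Lgen u$, integrate by parts twice in $t$ (the $t$ weight arises from $\int_0^\infty t\,\phi''\,dt$ rather than from the $a\to\infty$ limit of the Green function), and replace the probabilistic jump sum by the Dirichlet-form identity for $-\int u^{p-1}\Lgen u\,dx$. The vertical/continuous term $p(p-1)u^{p-2}u_t^2$ that you drop is exactly the continuous quadratic-variation contribution the paper drops; your Dirichlet-form term is exactly the L\'evy-system term. So the two proofs are the analytic and probabilistic readings of the same decomposition, and your version may appeal to readers more comfortable with semigroup PDE arguments. A few points to tighten: the constant in the harmonicity identity depends on the normalization of $Z$; the Green function $\E^a\int_0^{T_0}f(Z_s)\,ds=\int_0^\infty (z\wedge a)f(z)\,dz$ and the density $\mu_t$ in~(\ref{brownian_density}) correspond to generator $\partial_{tt}$, not $\tfrac12\partial_{tt}$, so the identity should read $u_{tt}=-\Lgen u$ (a harmless constant). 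More substantively, you should regularize as the paper does, working with $f\ge\epsilon>0$ and letting $\epsilon\to0$ at the end: for $1<p<2$ the quantities $u^{p-2}u_t^2$ and $u^{p-1}\Lgen u$ are not obviously integrable when $u$ can vanish, and the Dirichlet-form identity requires $u^{p-1}$ to lie in the form domain; the pointwise identity you cite for $\xi^{p-2}(u(x+h)-u(x))^2$ gives nonnegativity but not integrability. Finally, the vanishing of the boundary terms $T\phi_x'(T)$ and $\phi_x(T)$ after integrating in $x$ does need the quantitative decay $\|q_t\|_\infty\lesssim t^{-2d/\alpha}$ that you sketch, plus an argument that $t\,\tfrac{d}{dt}\|Q_tf\|_p^p\to 0$ (not merely that it is bounded); this is where the paper's route, which never produces such boundary terms because the Green function $a\wedge t$ is built in from the start, is somewhat cleaner.
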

 \begin{proof}
     Suppose $f$ is positive and consider the map $F(x)=(x+\epsilon)^p$ for some $\epsilon >0$. Clearly, $F\in \mathcal{C}^2(\mathbb{R}^+)$. So we can apply the It\^o formula using the martingale $M^f_t=f(X_{t\wedge T_0})$, and we obtain
     \begin{align*}
     	(M^f_t+\epsilon)^p & = (M^f_0+\epsilon)^p + p \int_0^t (M^f_{s-}+\epsilon)^{p-1} dM^f_s\\
					&\qquad+\frac{p(p-1)}{2}\int_0^t (M^f_{s-}+\epsilon)^{p-2} d\langle (M^f)^c \rangle_s \\
					& \qquad +\sum_{s\leq t} \left[ (M^f_s+\epsilon)^p- (M^f_{s-}+\epsilon)^p -p (M^f_{s-}+\epsilon)^{p-1} (M^f_s-M^f_{s-})\right]. 
     \end{align*}
     By \cite[P.168]{meyer_long}, there is a positive  function $j(s)$ such that $d\langle (M^f)^c \rangle_s\geq j(s) ds$, and hence the third term on the right hand side is positive. Moreover, the convexity of the function $F$ implies that the jump terms are also positive. So taking expectations first, and letting $t\rightarrow \infty$ and $\epsilon \rightarrow 0$, we have
     \begin{align*}
     	\Ema((M^f_{T_0})^p )& \geq \Ema\left( \sum_{s\leq T_0} \left[ (M^f_s)^p- (M^f_{s-})^p -p (M^f_{s-})^{p-1} (M^f_s-M^f_{s-})\right] \right). 
     \end{align*}
    We recall that $||f||_p^p=\Ema((M^f_{T_0})^p )$ [Remarks (\ref{remark:4}) and (\ref{remark:5})]. If we denote by $\Lambda(f,x,y,z)$ the expression
\begin{align*}
	\Lambda(f,x,y,z)=f^p(x,z)- f^p(y,z) -p f^{p-1}(y,z) (f(x,z)-f(y,z))
\end{align*}
then we can write
    \begin{align*}
     	||f||_p^p& \geq \Ema\left( \sum_{s\leq T_0} \left[ \Lambda(f,Y_s,Y_{s-},Z_s)\right] \right). 
     \end{align*}
   Next, we will use the L\'evy system formula, invariance of $P_t$ under the Lebesgue measure $m$, and Green's function, respectively, and the right hand side of the last inequality becomes
   \begin{align*}
	              & \Ema\left( \int_0^{T_0} \int \Lambda(f,Y_s+h,Y_{s},Z_s) \frac{dh}{|h|^{d+\alpha}} \, ds \right) \\ \\
	              &= \int \E^a\left( \int_0^{T_0} \int\Lambda(f,y+h,y,Z_s)\frac{dh}{|h|^{d+\alpha}} \, ds \right) dy \\ \\
	              &= \int  \int_0^{\infty} (a\wedge t) \int\Lambda(f,y+h,y,t)\frac{dh}{|h|^{d+\alpha}} \, dt \, dy.  
     \end{align*}
     If we let $a\rightarrow \infty$, then
      \begin{align*}
	       ||f||_p^p  &\geq \int  \int_0^{\infty}  t \int \Lambda(f,y+h,y,t) \frac{dh}{|h|^{d+\alpha}} \, dt \, dy.  
     \end{align*}
     If we use the Taylor expansion of $x\rightarrow x^p$, then for some $\xi$ between $f_t(x)$ and $f_t(x+h)$ we have  
     $$\Lambda(f,y+h,y,t)= \frac{p(p-1)}{2}\xi^{p-2}(f_t(y+h)-f_t(y))^2, $$
     and using this equality we obtain the result
     $$||f||_p^p\geq c \, \int \int_0^\infty t\, \int \xi^{p-2} (f(x+h)-f_t(x))^2 \, \frac{dh}{|h|^{d+\alpha}} \,dt \, dx.$$
 \end{proof}
 \begin{theorem}\label{area_function}
 If $p\in(1,2)$ and $f\in L^p(\Rd)$ then 
 $$
 	\|\overrightarrow{G}_{f,\alpha}\|_p \leq c \, \|f\|_p.
 $$
 \end{theorem}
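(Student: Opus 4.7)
The strategy is to couple the pointwise identity of Lemma~\ref{f_norm} with a H\"older inequality involving the Hardy--Littlewood maximal function $\mathcal{M}(f)$. We may assume $f\ge 0$ (otherwise split into positive and negative parts). Lemma~\ref{f_norm} strictly speaking applies to bounded functions in $\mathcal{D}(\mathcal{L})$, so we first pass it to general $f\in L^p$ by a routine truncation/density argument.

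First, I restrict the $h$-integral in Lemma~\ref{f_norm} to the parabolic region $|h|<t^{2/\alpha}$ to obtain
\begin{align*}
\|f\|_p^p \;\ge\; c \int \int_0^\infty t\int_{|h|<t^{2/\alpha}} \xi^{p-2}\bigl(f_t(x+h)-f_t(x)\bigr)^2\,\frac{dh}{|h|^{d+\alpha}}\,dt\,dx.
\end{align*}
Next I would show that on this region both $f_t(x)$ and $f_t(x+h)$ are dominated by $c\,\mathcal{M}(f)(x)$. Since $f_t=f*\phi_t$ with $\phi$ radially decreasing and $L^1$-normalised, the bound $f_t(x)\le c\,\mathcal{M}(f)(x)$ is the standard one. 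For $f_t(x+h)$, after rescaling to $u=w/t^{2/\alpha}$, $h'=h/t^{2/\alpha}$ with $|h'|<1$, I split the integral into $|u|\le 2$ and $|u|>2$: on the first piece I use $\phi(0)<\infty$ (which follows from the representation $\phi(0)=\int_0^\infty p(s,0,0)\,\mu_1(ds)$ together with the exponential vanishing of $\mu_1$ at $0$ and the $s^{-d/\alpha}\cdot s^{-3/2}$ decay at infinity), and on the second I use $\phi(h'+u)\le\phi(u/2)$ by radial decrease. Changing variables back yields $f_t(x+h)\le c\,\mathcal{M}(f)(x)$ uniformly for $|h|<t^{2/\alpha}$.

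Since $\xi$ lies between the two values and $p-2<0$, we get $\xi^{p-2}\ge c\,\mathcal{M}(f)(x)^{p-2}$, and therefore
\begin{align*}
\|f\|_p^p \;\ge\; c \int \mathcal{M}(f)(x)^{p-2}\,\overrightarrow{G}_{f,\alpha}(x)^2\,dx.
\end{align*}
Finally, I apply H\"older's inequality with conjugate exponents $2/p$ and $2/(2-p)$:
\begin{align*}
\int \overrightarrow{G}_{f,\alpha}^p\,dx
&= \int \bigl(\overrightarrow{G}_{f,\alpha}^2\,\mathcal{M}(f)^{p-2}\bigr)^{p/2}\,\mathcal{M}(f)^{p(2-p)/2}\,dx \\
&\le \Bigl(\int \overrightarrow{G}_{f,\alpha}^2\,\mathcal{M}(f)^{p-2}\,dx\Bigr)^{p/2}\Bigl(\int \mathcal{M}(f)^p\,dx\Bigr)^{(2-p)/2} \;\le\; c\,\|f\|_p^p,
\end{align*}
using the preceding inequality and the $L^p$-boundedness of $\mathcal{M}$ for $p>1$; an exponent check gives $p^2/2+p(2-p)/2=p$.

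The main obstacle is the uniform estimate $f_t(x+h)\le c\,\mathcal{M}(f)(x)$ on the parabolic region, and this is precisely why the restriction $|h|<t^{2/\alpha}$ in the definition of $\overrightarrow{G}_{f,\alpha}$ is essential: without the constraint linking the spatial displacement to the scale, one could only bound $\xi$ by $\mathcal{M}(f)$ evaluated at moving points $x+h$ and the Hölder step in Step~4 would break down, consistent with Silverstein's counter-example for the full horizontal $G$-function.
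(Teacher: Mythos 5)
Your overall scheme matches the paper's: start from Lemma~\ref{f_norm}, restrict the $h$-integral to $|h|<t^{2/\alpha}$, bound $\xi^{2-p}$ pointwise by $c\,\mathcal{M}(f)(x)^{2-p}$, and finish with H\"older ($2/p$, $2/(2-p)$) plus the $L^p$-boundedness of the maximal operator. The exponent bookkeeping is correct. The genuine divergence is in the key pointwise bound. The paper establishes $f_t(x+h)\le c\,f_t(x)$ for $|h|<t^{2/\alpha}$ by a chaining argument with the Harnack inequality (Theorem~\ref{harnack_inequality}): it covers the segment from $(x,t)$ to $(x+h,t)$ with a bounded number $n$ of overlapping boxes $D_{t/32}$, applies Harnack to the non-negative harmonic extension on each, and then uses $f_t(x)\le c\,\mathcal{M}(f)(x)$. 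You instead prove $f_t(x+h)\le c\,\mathcal{M}(f)(x)$ directly from the convolution representation $f_t=f*\phi_t$: after rescaling $h'=h/t^{2/\alpha}$, $|h'|<1$, you split the kernel integral into $|u|\le 2$ (using $\phi(0)<\infty$, which does follow from the subordination formula and the exponential vanishing of $\mu_1$ near $s=0$) and $|u|>2$ (using $|u+h'|\ge |u|/2$ and radial monotonicity of $\phi$), and the standard approximate-identity bound gives the maximal function. Both routes are correct; your kernel-level argument is more elementary and sidesteps the Harnack machinery entirely, whereas the paper's choice is deliberate --- the introduction states that the Littlewood--Paley result is meant as an application of the Harnack inequality developed in Section~\ref{HarnackInequality}. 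One small point of care: when you pass from $f\ge 0$ to Lemma~\ref{f_norm} you still need a lower bound on $\xi$; the paper achieves this by working with $f\ge\epsilon>0$ and letting $\epsilon\to 0$, and your ``truncation/density'' remark should be understood to include this regularization, not only the splitting into positive and negative parts.
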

 \begin{proof}
 Without loss of generality we may assume that $f \geq \epsilon>0$. Then it can be generalized to $f\in L^p$. By definition of the harmonic extension, we also have $f_t\geq \epsilon$. By Lemma (\ref{f_norm}), there is $\xi=\xi(t,h,x)$ between $f_t(x)$ and $f_t(x+h)$ such that 
 $$||f||_p^p\geq c \, \int \int_0^\infty t\, \int \xi^{p-2} (f(x+h)-f_t(x))^2 \, \frac{dh}{|h|^{d+\alpha}} \,dt \, dx.$$
 Using this $\xi$, we can write
 \begin{align*}
 	\overrightarrow{G}_{f,\alpha}(x)  &  =\disp\left[ \int_0^\infty t\, \int_{\{|h|<t^{2/\alpha}\}} (f_t(x+h)-f_t(x))^2 \, \frac{dh}{|h|^{d+\alpha}} \,dt  \right]^{1/2} \\ \\
		    & = \disp\left[ \int_0^\infty t\, \int_{\{|h|<t^{2/\alpha}\}} \, \xi^{2-p}\, \xi^{p-2}\, (f_t(x+h)-f_t(x))^2 \, \frac{dh}{|h|^{d+\alpha}} \,dt  \right]^{1/2}.
 \end{align*}
 Temporarily, we fix $t>0$. Let $R_t$ be the rectangular box centered at $(x,t)$ with side-length $(\frac{t}{32})^{2/\alpha}, (\frac{t}{32})^{2/\alpha}, ... , (\frac{t}{32})^{2/\alpha}, \frac{t}{32}$, that is $R_t=D_{t/32}(x,t)$. Then $R_{32t} \subset \Rd\times\Rp$. Let $\beta(x,h,t)$ denotes the linear path from $(x,t)$ to $(x+h,t)$ where $|h|<t^{2/\alpha}$. This path can be covered by $n$-many horizontal translations of the box $R_t$, say $R^1_t, R^2_t, ... , R^n_t$, such that $R^j_t \cap R^{j+1}_t \not=\emptyset$ for $j\in\{1,2,…,n-1\}$ and $R_t \cap R^{1}_t \not=\emptyset$. Note that $n$ can be chosen so that it does not depend on $t$, $x$, or $h$. Choose points from each pairwise intersection, say $(x_1,t)\in R_t \cap R^1_t$, $(x_2,t)\in R^1_t \cap R^2_t$, ... , $(x_n,t)\in R^{n-1}_t \cap R^n_t$. By using the Harnack inequality, we obtain
	\begin{align*}
		f_t(x+h)\leq c \, f_t(x_n) \leq c^2 \, f_t(x_{n-1}) \leq ... \leq c^n f_t(x_1) \leq c^{n+1} f_t(x).
	\end{align*}
	Hence, if $|h|<t^{2/\alpha}$, then $f_t(x+h) \leq c^{n+1} f_t(x)$, and $\xi \leq [f_t(x) \vee f_t(x+h)]\leq c^{n+1} f_t(x)$. This implies that  $\xi^{2-p} \leq  \, c\,f_t^{2-p}(x)  \leq c' \, [\mathcal{M}(f)(x)]^{2-p} $. So $\overrightarrow{G}_{f,\alpha}(x)$ is bounded above by 
	 \begin{align*}
 	 c'\, \disp [\mathcal{M}(f)(x)]^{(2-p)/2} \left[ \int_0^\infty t\, \int_{\{|h|<t^{2/\alpha}\}} \,  \xi^{p-2}\, (f_t(x+h)-f_t(x))^2 \, \frac{dh}{|h|^{d+\alpha}} \,dt  \right]^{1/2}.
 \end{align*}	
 By using the H\"older inequality with 2/(2-p) and 2/p,
  \begin{align*}
 	 \|\overrightarrow{G}_{f,\alpha}(x)\|_p^p   & \leq c'  \int \disp [\mathcal{M}(f)(x)]^{(2-p)p/2} \cdot \\ & \qquad \left[ \int_0^\infty t\, \int_{\{|h|<t^{2/\alpha}\}} \,  \xi^{p-2}\, (f_t(x+h)-f_t(x))^2 \, \frac{dh}{|h|^{d+\alpha}} \,dt  \right]^{p/2} m(dx) \\
		& \leq c'\,  \left[ \int \disp [\mathcal{M}(f)(x)]^p m(dx) \right]^{(2-p)/2} \, \cdot \\ & \qquad  \left[ \int  \int_0^\infty t\, \int_{\{|h|<t^{2/\alpha}\}} \,  \xi^{p-2}\, (f_t(x+h)-f_t(x))^2 \, \frac{dh}{|h|^{d+\alpha}} \,dt  m(dx)\right]^{p/2}\\
		& \leq c'\, \|f\|_p^{p(2-p)/2} \|f\|_p^{p^2/2}.
 \end{align*}
Then the desired result follows.
 \end{proof}


\begin{acknowledgements}
The results of this paper are
part of my Ph.D.
dissertation \cite{karli} written under the supervision of Prof. Richard Bass at the Department of Mathematics of the University of Connecticut. I would like to thank him for his valuable comments and his support.
\end{acknowledgements}



\end{document}